\theoremstyle{plain}
\newtheorem{thm}{Theorem}
\newtheorem{lem}{Lemma}
\newtheorem{cor}{Corollary}
\newtheorem{prop}{Proposition}
\theoremstyle{definition}
\def \CPb {\overline{\mathbf{CP}}{}^{2}}
\def \CP {\mathbf{CP}{}^{2}} 
\def \CPC {\mathbf{CP}{}^{2}\# \,3\,\overline{\mathbf{CP}}{}^{2}}
\def \R {\mathbf{R}}
\def \Z {\mathbf{Z}}
\def \Sig{\Sigma}
\def \vt {\vartheta}
\def \vp {\varphi}
\def \a {\alpha}
\def \b {\beta}
\def\L{\Lambda}
\def \o {\omega}
\def \bd {\partial}
\def \x {\times}
\def \- {\smallsetminus}
\def \C {\subset}
\def \sign{{\text{sign}}}
\def \DD {\Delta}
\def \wB {\widetilde{B}}
\def\spinc{spin$^{\text{c}}$}
\def \bbF {\mathbf{F}_1}
\def \TO {T_0\x T_0}
\begin{document}

\baselineskip.5cm
\title [Surgery on Nullhomologous Tori] {Surgery on Nullhomologous Tori}
\author[Ronald Fintushel]{Ronald Fintushel}
\address{Department of Mathematics, Michigan State University \newline
\hspace*{.375in}East Lansing, Michigan 48824}
\email{\rm{ronfint@math.msu.edu}}
\thanks{R.F. was partially supported by NSF Grant DMS-1006322.}
\author[Ronald J. Stern]{Ronald J. Stern}
\address{Department of Mathematics, University of California \newline
\hspace*{.375in}Irvine,  California 92697}
\email{\rm{rstern@uci.edu}}
%\date{November 24, 2011}

\begin{abstract} By studying the example of smooth structures on $\CPC$ we illustrate how surgery on a single embedded nullhomologous torus can be utilized to change the symplectic structure, the Seiberg-Witten invariant, and hence the smooth structure on a $4$-manifold.
\end{abstract}
\maketitle

\section{Introduction}\label{Intro}

Around 2007 a major breakthrough occurred in smooth $4$-manifold theory when several authors (\cite{AP} and \cite{BK}) produced what was then the smallest known simply connected $4$-manifold, $\CPC$, with more than one smooth structure. The idea of reverse engineering, as developed by the authors of this paper and presented in \cite{RevEng}, is to find a symplectic model which should have the same Euler characteristic and signature as the manifold to be constructed, and should contain enough disjoint Lagrangian tori so that surgeries on them kill $\pi_1$. In particular, for $\CPC$, we showed in 2006 that an appropriate model is $Sym^2(\Sig_3)$, the symplectic square of a Riemann surface of genus $3$, but we never thought that the result of our surgeries could ever be simply connected. However, it turns out that the manifolds produced in \cite{AP} and \cite{BK} can equivalently be obtained from reverse engineering starting with $Sym^2(\Sig_3)$, and, from our vantage point, proving simple connectivity was a major contribution.

Pushing this further, as pointed out in \cite{RevEng} and \cite{Pin}, this procedure can be reinterpreted to show that surgery on nullhomologous tori is the underlying operation that is used to change the smooth structure of a $4$-manifold.  In this paper we hope to motivate and enhance this idea by proving the following theorem.

\begin{thm}\label{one} There is a nullhomologous torus $T$ embedded in $\CPC$ such that surgeries on $T$ result in an infinite family of smooth $4$-manifolds which are all homeomorphic to $\CPC$ but which are pairwise nondiffeomorphic.
\end{thm}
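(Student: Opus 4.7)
The plan is to extract one surgery from the reverse engineering construction of the exotic $\CPC$ of \cite{AP,BK}, show that the associated torus is nullhomologous, and then vary the surgery coefficient to build an infinite family distinguished by Seiberg-Witten invariants.

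Starting with the symplectic model $Z=Sym^{2}(\Sig_{3})$, which has the Euler characteristic and signature of $\CPC$, I take the configuration of disjoint Lagrangian tori $T_{1},\dots,T_{k}$ whose simultaneous Luttinger surgery is known to kill $\pi_{1}$ and produce the exotic $\CPC$. I then perform Luttinger surgery on $T_{1},\dots,T_{k-1}$ only, obtaining a symplectic $4$-manifold $Y$ that still contains $T:=T_{k}$ as a Lagrangian torus. The first step is to verify that $T$ is \emph{nullhomologous} in $Y$. This is done by a Mayer-Vietoris bookkeeping across the $k-1$ surgeries, which shows that $[T]$ becomes an integer combination of classes already trivialised and hence zero in $H_{2}(Y)$.

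Next, choose the curve $\a\subset T$ coming from the Lagrangian framing and, for each $n\in\Z$, let $X_{n}$ be obtained from $Y$ by $1/n$ torus surgery on $T$ along $\a$ inside $\nu T\cong T^{2}\x D^{2}$. The case $n=1$ is precisely the omitted Luttinger surgery, so $X_{1}$ is the simply-connected exotic $\CPC$ of \cite{AP,BK}. For each $n$, the Mayer-Vietoris computation of $H_{*}$ is unchanged by the coefficient (because $T$ is nullhomologous), so $X_{n}$ has the intersection form of $\CPC$; the van Kampen calculation that trivialised $\pi_{1}(X_{1})$ uses only the single relation supplied by the surgery curve, and the same analysis extends to every $n$, giving $\pi_{1}(X_{n})=1$. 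By Freedman's theorem, each $X_{n}$ is homeomorphic to $\CPC$.

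The final and principal step is to distinguish the $X_{n}$ smoothly. The tool is the surgery formula of \cite{RevEng,Pin} for Seiberg-Witten invariants under a torus surgery on a nullhomologous torus, which (under appropriate chamber conventions for the $b^{+}=1$ invariants) expresses $\ssw_{X_{n}}$ as an affine function of $n$ with coefficients involving the relative Seiberg-Witten invariants of $Y\-\nu T$. Since $X_{1}$ is symplectic, Taubes' theorem produces a nontrivial basic class in the symplectic chamber, so the affine family $n\mapsto\ssw_{X_{n}}$ is genuinely nonconstant and takes infinitely many distinct values, separating infinitely many diffeomorphism types among the $X_{n}$. The main obstacle is performing this Seiberg-Witten calculation consistently in the $b^{+}=1$ setting: one must use the small-perturbation invariants with fixed chamber and verify that the $n$-dependence is not absorbed by wall-crossing, and it is this $b^{+}=1$ bookkeeping that constitutes the technical heart of the argument.
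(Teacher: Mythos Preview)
Your proposal misses the central point of the theorem: the torus must live in the \emph{standard} $\CPC$, not merely in some manifold homeomorphic to it. What you actually build is a torus $T=T_k$ inside the intermediate manifold $Y=M_{k-1}$ obtained after $k-1$ of the $k$ Luttinger surgeries. This $Y$ is not $\CPC$: it has $b_1(Y)=1$ and $b^+(Y)=2$ (each Luttinger surgery drops $b_1$ and $b^+$ by one; cf.\ diagram~\eqref{diagram}). Moreover your claim that $T_k$ is nullhomologous in $Y$ is false. The whole setup of reverse engineering is that the last Lagrangian torus is \emph{primitive} in $M_{k-1}$, with its surgery curve essential in $H_1$ of the complement; it is only the \emph{core torus of the resulting surgery} that is nullhomologous, and that core torus sits in $X=M_k$, the \emph{exotic} $\CPC$. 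So even the corrected version of your argument produces a nullhomologous torus in an exotic copy, together with the family $X_{1/n}$ of \S\ref{RE}; this is exactly the already-known output of reverse engineering and does not place the torus in the standard rational surface.

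The paper's proof supplies precisely the missing idea. It uses the pinwheel description of $\CPC$ to exhibit six explicit nullhomologous Bing tori in the standard $\CPC$, and then introduces Lemma~\ref{0van} (the $0$-vanishing cycle lemma): if the surgery curve on a self-intersection~$0$ torus bounds a $0$-framed disk in the complement, then any $1/n$ surgery returns the same manifold. The pinwheel geometry provides exactly such disks for five of the six tori (coming from the cocores of the Borromean $2$-handles in the adjacent pinwheel component), so performing those five surgeries gives back $\CPC$ and re-embeds the sixth torus in the \emph{standard} $\CPC$. Your Mayer--Vietoris/van Kampen/Seiberg--Witten outline never touches this issue; without a mechanism like Lemma~\ref{0van} to show that the first five surgeries are diffeomorphically trivial, there is no way to locate the single torus inside $\CPC$ itself.
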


\noindent This theorem is true more generally for $\mathbf{CP}{}^{2}\# \,k\,\overline{\mathbf{CP}}{}^{2}$, $2\le k\le 7$, following the ideas below and the constructions of \cite{Pin}.

Given a general smooth $4$-manifold, it is not known whether it is possible to find such tori, and even if it is, they are difficult to find. We give a hint at their detection by showing below that in the case of symplectic manifolds with $b^+=1$ these nullhomologous tori are constrained by their relationship with the canonical class.

\section{Nullhomologous tori}

An effective procedure for altering the smooth structure of a given 4-manifold X relies on the ability to find a suitably embedded torus $T$ with trivial normal bundle that represents a nontrivial homology class  and to perform surgeries on this torus. If the torus $T$ is the fiber of an elliptic fibration, this surgery is also known as `log transform'.  If both $X$ and $X\- N_T$ are simply connected, and if the Seiberg-Witten invariant of $X$ is nontrivial, then another effective surgery on $T$, the knot surgery introduced in \cite{KL4M}, will also change the smooth structure of $X$. In each of these cases, the new smooth manifold also contains an essential torus of self-intersection $0$. However, not all $4$-manifolds contain such suitably embedded tori. In particular, the following lemma shows that in the situation which is of particular interest to us, we can find no tori with trivial normal bundle representing a nontrivial homology class. 

\begin{lem}\label{null} Let $X$ be a smooth simply connected $4$-manifold with $b^+=1$, $b^-\le8$ and with a nontrivial Seiberg-Witten invariant. Then there are no homologically essential embedded tori of square $0$ in $X$.
\end{lem}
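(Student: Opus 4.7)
The plan is to suppose for contradiction that such a torus $T$ exists and to derive a contradiction from the Seiberg--Witten adjunction inequality combined with the classification of indefinite unimodular forms. First, since $X$ has a nontrivial SW invariant, it admits a basic class $k$. The Li--Liu (or Ozsv\'ath--Szab\'o) adjunction inequality for $b^+=1$ manifolds, applied to the genus-one surface $T$ whose class has square zero, gives
\begin{equation*}
|k\cdot[T]| \;\le\; 2g(T) - 2 - [T]^2 \;=\; 0,
\end{equation*}
so $k\cdot[T]=0$ for every basic class $k$.

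Next I would appeal to Serre's classification: since $X$ is simply connected with $b^+=1$ and $b^-\le 8$, its intersection form is either the odd diagonal form $\la 1\ra\oplus b^-\la -1\ra$ or, in the exceptional case $b^-=1$, the hyperbolic form $H$. In the odd case, fix the standard basis $h,e_1,\dots,e_{b^-}$ with $h^2=1$ and $e_i^2=-1$, and write $k=ah+\sum_i c_i e_i$ (with $a$ and all $c_i$ odd since $k$ is characteristic) and $[T]=xh+\sum_i y_i e_i$. The SW dimension inequality then reads
\begin{equation*}
a^2 - \sum_i c_i^2 \;=\; k^2 \;\ge\; 2\chi(X)+3\sigma(X) \;=\; 9-b^-,
\end{equation*}
while the conditions $[T]^2=0$ and $k\cdot[T]=0$ translate to $x^2=\sum_i y_i^2$ and $ax=\sum_i c_i y_i$.

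The decisive step is a Cauchy--Schwarz estimate: squaring the second identity gives
\begin{equation*}
a^2 x^2 \;=\; \Bigl(\sum_i c_i y_i\Bigr)^{\!2} \;\le\; \Bigl(\sum_i c_i^2\Bigr)\Bigl(\sum_i y_i^2\Bigr) \;=\; \Bigl(\sum_i c_i^2\Bigr)x^2.
\end{equation*}
If $x\ne 0$, cancellation forces $a^2\le\sum_i c_i^2$, and combining with the dimension bound yields $9-b^-\le 0$, contradicting $b^-\le 8$. If $x=0$, then $\sum_i y_i^2=0$ forces $[T]=0$, contradicting essentiality. The even case $H$ (only when $b^-=1$) is handled analogously and more cheaply: any nonzero square-zero class is a multiple of an isotropic generator, but the characteristic condition together with $k^2\ge 8$ forces $k$ to pair nontrivially with both isotropic directions.

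The main obstacle I anticipate is the precise invocation of the adjunction inequality in the $b^+=1$ setting, where chamber structure of the SW moduli space requires verifying that the basic class $k$ and the class $[T]$ sit on compatible sides of the wall so that the Li--Liu adjunction formula applies to $T$. Once that analytic input is secured, the remainder is elementary linear algebra together with the Cauchy--Schwarz inequality, and it explains why the numerology ($b^-\le 8$, i.e.\ $2\chi+3\sigma\ge 1$) in the hypothesis is exactly what is needed.
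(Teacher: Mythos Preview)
Your proposal is correct and follows essentially the same route as the paper: both arguments pick a basic class $k$, use the adjunction inequality to force $k\cdot[T]=0$, diagonalize the intersection form, invoke the dimension bound $k^2\ge 2\chi+3\sigma=9-b^->0$, and finish with Cauchy--Schwarz. You are a bit more careful than the paper in separating out the $x=0$ case and the hyperbolic form, and in flagging the chamber issue for the $b^+=1$ adjunction inequality, but the core argument is the same.
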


\begin{proof} Let $k\in H_2(X;\Z)$ be a basic class, i.e $k$ is the Poincar\'e dual of $c_1(s)$ where $s$ is a \spinc-structure for which the Seiberg-Witten invariant is nonzero. 
Let $\{h,e_1,\dots,e_b\}$ be an orthogonal basis for $H_2(X;\Z)$ where $h^2=1$ and $e_i^2=-1$. Write 
$T= ah-\sum b_ie_i$ and $k=\a h-\sum \b_ie_i$. Then $T^2=0$, and since the dimension of the Seiberg-Witten moduli space corresponding to $k$ is nonnegative, we also have  $k^2\ge(3\,\sign+2\,e)(X)$, and this is $>0$ because of our hypothesis on $X$. Hence
\[ a^2=\sum b_i^2, \ \ \  \a^2>\sum \b_i^2\]
It follows directly from the adjunction inequality that $k\cdot T=0$; hence $a\a=\sum b_i\b_i$.
Thus Cauchy-Schwartz implies
\[ a\a > \sqrt{\sum b_i^2}\sqrt{\sum \b_i^2} \ge |\sum b_i\b_i | = |a\a| \]
This contradiction proves the lemma.
\end{proof}

In summary, for some manifolds, in particular the manifolds of the important class above, there are no essential tori upon which surgery can be used to change the smooth structure. Thus we are led to search for interesting nullhomologous tori in these manifolds.

\section{Surgery on tori}

Suppose that $T$ is a torus of self-intersection $0$ in a $4$-manifold $X$ with tubular neighborhood $N_T$. Let $a$ and $b$ be generators of $\pi_1(T^2)$ and  let $S^1_{a}$ and $S^1_{b}$ be loops in $T^3=\bd N_T$ homologous in $N_T$ to $a$ and $b$ respectively, and let $\mu_T$ denote a meridional circle to $T$ lying on $T^3$. By {\it{$p/q$-surgery}} on $T$ with respect to $b$ we mean 
\begin{equation} \label{surgery}
\begin{split} X_{T,b}&(p/q) =(X\- N_T) \cup_{\vp} (S^1\x S^1 \x D^2),\\
& \vp: S^1 \x S^1 \x \bd D^2 \to \bd(X\- N_T) \end{split}
\end{equation}
where the gluing map 
satisfies $\vp_*([\bd D^2]) = q [S^1_{b}] + p [\mu_T]$ in $H_1(\bd(X\- N_T);\Z )$.  When $T$ and $b$ are understood, we simply write $X_{p/q}$.

Note that we have framed $N_{T}$ using $S^1_{a}$ and $S^1_{b}$ so that pushoffs of $a$ and $b$ in this framing are $S^1_{a}$ and $S^{1}_{b}$. When the curve $S^{1}_{b}$ is nullhomologous in $X\- N_T$, then $H_1(X_{T,b}(1/q);\Z )= H_1(X;\Z )$. As long as $\mu_T$ generates an infinite cyclic summand of $H_1(X\- N_T;\Z )$, the framing  $S^1_b$ is the unique ``nullhomologous framing'' of $b$.
If in addition, $T$ itself is nullhomologous,  then $H_1(X_{T,\b}(p/q);\Z )= H_1(X;\Z ) \oplus \Z / p\Z$.

Let $T'$ be a homologically primitive torus in a $4$-manifold $X'$ with tubular neighborhood $N_{T'}$, and suppose that $b'$ is a nonseparating loop on $T'$. Because $T'$ is primitive, there is an oriented surface which intersects $T'$ once. Thus the meridian $\mu_{T'}$ bounds a surface in $X'\- N_{T'}$. Fix a framing of $T'$ and suppose that the pushoff $S^1_{b'}$ represents a nontrivial element of $H_1(X'\- N_{T'};\R)$.

Now perform $+1$-surgery on $T'$ with respect to $b'$ and the framing that was chosen above. (The situation is the same for $-1$ surgery on $T'$.) The manifold $X$ which results from this surgery contains a torus $T$ of self-intersection $0$, the core torus 
$S^1\x S^1 \x \{0\}$ in the formula above. The meridional loop $\mu_T$ to this torus is homologous to $S^1_{b'}+\mu_{T'}$, which is in turn homologous to $S^1_{b'}$, since $\mu_{T'}$ is homologically trivial in $X'\- N_{T'}=X\- N_T$. Thus $\mu_T$ represents a nonzero class in $H_1(X\- N_T;\R)$; so $T$ must be nullhomologous. The gluing \eqref{surgery} takes  $\mu_{T'}$ to a loop $S^1_b$ on $\bd N_T$ which is homotopic in $N_T$ to a loop $b$ on $T$. We see that the pushoff $S^1_b$ is nullhomologous in $X\- N_T$.
Conversely if we perform $0$-surgery on $T$ with respect to $b$ and the framing given by $S^1_b=\mu_{T'}$, the resultant manifold is $X'$. The situation is summarized in diagram \eqref{diagram}. 

\begin{equation}\label{diagram}
\begin{array}{ccc}
X'\hspace{.75in}&\xrightarrow{\text{{$+1$\ surgery}}}\hspace{.25in} & X\\
&\hspace*{-.25in}\xleftarrow{\text{{$0$\ surgery}}}&\\ 
{\hspace*{-.25in}\text{$T'$ primitive}}   && {\text{$T$ nullhomologous}} \\
{\text{$b'$ essential in complement}}&&{\text{$b$ nullhomologous in complement}}\\
& b^+(X')= b^+(X) +1&\\
\end{array}
\end{equation}

There is a useful formula of Morgan, Mrowka, and Szabo which determines Seiberg-Witten invariants in this situation. Suppose we are given a manifold $X$ upon which we would like to do $1/n$ surgery with respect to a torus $T$ and a loop $b$ as above. The Morgan-Mrowka-Szabo formula determines the Seiberg-Witten invariants of the surgered manifold $X_{1/n}$ in terms of those of $X$ itself and of $0$-surgery $X_0$ as above. We write $T_0$ for the core torus of the surgery in $X_0$. (This is the torus denoted $T'$ in the paragraph above.) Similarly $T_{1/n}$ is the core torus in $X_{1/n}$.

Let $k_0\in H_2(X_0;\Z)$ be a basic class. The adjunction inequality implies that $k_0$ is orthogonal to $T_0$. Thus, there are (unique, because $T$ (resp. $T_{1/n}$) are nullhomologous) corresponding homology classes $k_{1/n}$ and $k$ in $H_2(X_{1/n};\Z)$ and $H_2(X;\Z)$, respectively, where $k$
 agrees with the restriction of $k_0$ in $H_2(X\- N_T,\bd;\Z)$ in the diagram:
\[ \begin{array}{ccc}
H_2(X;\Z) &\longrightarrow & H_2(X, N_T;\Z)\\
&&\Big\downarrow \cong\\
&&H_2(X\- N_T,\bd;\Z)\\
&&\Big\uparrow \cong\\
H_2(X_0;\Z)&\longrightarrow & H_2(X_0,N_T;\Z)
\end{array}\]
and similarly for $k_{1/n}$.

It follows from \cite{MMS} that 
\begin{equation}\label{MMS}
SW_{X_{1/n}}(k_{1/n}) = SW_X(k) + n\sum_i SW_{X_0}(k_0+i[T_0])
\end{equation}
and that these comprise all the basic classes of $X_{1/n}$. 

If there is a torus $T_d$ of self-intersection $0$ in $X_0$ such that $T_d\cdot T_0=1$, then adjunction inequality arguments show that the sum on the right of the above formula has at most one nonzero term. We will often be in the situation where each of the manifolds $X$, $X_{1/n}$, and $X_0$ has a unique basic class up to sign. In either of these cases we can unambiguously write $SW_{X_{1/n}}=SW_X+n\,SW_{X_0}$. We see that if $SW_{X_0}\ne0$, we obtain an infinite family of distinct smooth manifolds (determined by their Seiberg-Witten invariants) all with the same homology groups as $X$.

\section{Reverse engineering}\label{RE}

We begin by discussing how one ensures that the term $SW_{X_0}$ of \eqref{MMS} is nonzero.
If $X$ is a symplectic manifold and $T$ is  any Lagrangian torus, then there is a canonical framing, called the Lagrangian framing, of $N_{T}$. This framing is uniquely determined by the property that pushoffs of $T$ in this framing remain Lagrangian. If one performs $1/n$ surgeries with respect to the pushoff in this framing of any curve on $T$, then the result is also a symplectic manifold. We refer the reader to  \cite{ADK} for a full discussion of this phenomena, which is referred to as {\it{Luttinger surgery}}. However, one must be careful that it is often the case that  the pushoff of a curve using the Lagrangian framing may not be nullhomologous, so that a $1/n$ (Luttinger) surgery may in fact may change $H_{1}$. This will be the case when performing reverse engineering. 

It is important to note that surgery on tori changes neither the signature, $\sign (X)$ nor the Euler characteristic, $e(X)$, of a $4$-manifold $X$. Also, as mentioned above, if $H_1(X;\Z)=0$ and $T$ is a nullhomologous torus with a nonseparating loop $b$ which is nullhomologous in 
$H_1(X\- N_T;\Z)\cong \Z$, then a $1/n$ surgery on $T$ with respect to the nullhomologous framing of $b$ leaves $H_1(X;\Z)$ unchanged.
Furthermore, if $X$ is simply connected, $SW_{X_0}\ne0$, and if it can also be seen that the manifolds $X_{1/n}$ are simply connected, then we obtain an infinite family of distinct manifolds all homeomorphic to $X$. This is the key idea of ``reverse engineering". The method is as follows:

Suppose that we wish to construct manifolds which are homeomorphic to a given simply connected $4$-manifold $R$ with $b^+_R\ge 1$ but which are pairwise nondiffeomorphic. Here is a recipe: First seek out a symplectic manifold $M$ which 
\begin{enumerate}
\item[1.] has the same Euler characteristic and signature as $R$. 
\item[2.] has $n=b_1(M)$ pairwise disjoint Lagrangian tori, $T_i$, each containing a nonseparating loop $b_i$, such that the collection of $b_i$ represents a basis for $H_1(M;\R)$.
\end{enumerate}
Now perform $\pm 1$ Luttinger surgeries consecutively on the Lagrangian tori $T_i$, obtaining symplectic manifolds $M_i$ with  $b_1(M_i)=b_1(M)-i$, $b^+(M_i)=b^+(M)-i$, and in particular, $b_1(M_n)=0=b_1(R)$ and $b^+(M_n)=b^+(R)$. 

Denote the symplectic manifold $M_n$ by $X$. Referring to diagram \eqref{diagram} we see that there is a distinguished nullhomologous torus $T$ in $X$ containing a loop $b$ which has a pushoff which is nullhomologous in the complement of $T$. Performing $0$-surgery with respect to this framing yields $M_{n-1}$ as in \eqref{diagram}. Because $M_{n-1}$ is symplectic and $b^+(M_{n-1})=b^+(R)+1\ge2$, it follows that $SW_{X_0}=SW_ {M_{n-1}}\ne0$.

The Seiberg-Witten invariant of $X_{1/n}$, the result of $1/n$ surgery on $T$ with respect to the nullhomologus framing of $b$, is calculated via \eqref{MMS}. For example, if we are in one of the situations where the sum on the right of  \eqref{MMS} reduces to a single term, then it is clear that the manifolds $X_{1/n}$ are pairwise nondiffeomorphic. In any case, infinitely many of the $X_{1/n}$ are pairwise nondiffeomorphic (see \cite{RevEng}). If, furthermore, each of the manifolds $X_{1/n}$ is simply connected, then they are all homeomorphic to $R$. However, it is often difficult to verify that any of the manifolds $X_{1/n}$ are indeed simply connected.

As we have already mentioned, this process is called ``reverse engineering". The manifold $M$ is called a ``model manifold" for $R$. 

\section{$\CPC$}

The reverse engineering process follows easily when $R=\CPC$. As we showed in \cite{RevEng}, $M=Sym^2(\Sig_3)$ may be taken as a model manifold in that case. Then $\pi_1(M)=\Z^6$ and one can find six Lagrangian tori upon which Luttinger surgeries kill $\pi_1$. We are then left with a symplectic manifold $X$ which is homeomorphic to $\CPC$ and a nullhomologous torus $T$ in $X$ containing a nullhomologous loop $b$ so that $1/n$ surgeries with respect to the nullhomologous framing of $b$ give an infinite family of homeomorphic but pairwise nondiffeomorphic  $4$-manifolds. Of these manifolds $X_{1/n}$, only $X$ itself is symplectic.

In fact, $X$ is not diffeomorphic to $\CPC$. That one is creating an exotic smooth structure follows almost directly from \cite{ADK}. Since $Sym^2(\Sig_3)$ is a complex surface of general type, its canonical class pairs positively with its Kahler form. Surgering Lagrangian tori does not change this phenomenon  (see \cite[p.2112]{RevEng}); so we obtain a symplectic $4$-manifold $(X,\o_X)$ which is homeomorphic to $\CPC$ but for which $K_X\cdot\o_X>0$. According to \cite{LL}, however, each symplectic structure on $\CPC$ satisfies $K\cdot\o<0$; so $X$ cannot be diffeomorphic to $\CPC$. In fact, in \cite{RevEng} it is shown by using adjunction inequality arguments that the Seiberg-Witten invariant of $X$ is $t-t^{-1}$ where $t\in \Z H_2(X;\Z)$ corresponds to the canonical class, whereas the Seiberg-Witten invariant of $\CPC$ vanishes since $\CPC$ admits a positive scalar curvature metric. 

Our goal now is to enhance ths procedure to show that there is a single nullhomologous torus $T$ in $\CPC$ upon which the sequence of  $1/n$ surgeries gives our infinite family of manifolds. To do this we need to answer two questions:
\begin{enumerate}
\item[1.] How does one go about finding nullhomologous tori in a manifold?\\
\item[2.] What criteria do they need to satisfy in order to be useful for changing the smooth structure? 
\end{enumerate}
We are not able to answer these questions generally, but we can give satisfying answers in the case of $\CPC$ and more generally for $\mathbf{CP}{}^{2}\# \,k\,\overline{\mathbf{CP}}{}^{2}$, $2\le k\le 7$.

To address the first of the questions, suppose that we have a smooth $4$-manifold $R$ which contains an embedded smooth torus $T$ of self-intersection $0$. Choose local coordinates in which a tubular neighborhood $T\x D^2$ of $T$ is $S^1\x (S^1\x D^2)$. The {\it Bing double} $B_T$ of $T$ consists of the pair of tori $S^1\x$ (Bing double of the core circle $S^1\x \{0\}$). The solid torus $S^1\x D^2$ is shown in Figure~\ref{Doubles}(a). 
\begin{figure}[ht]
\begin{center}
     \subfigure[Bing double]{\includegraphics[scale=.7]{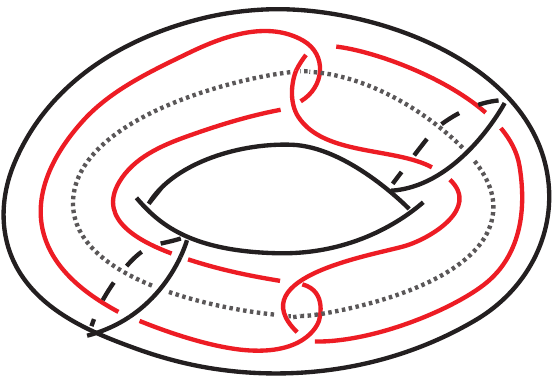}}
     \hspace{.2in}
    \subfigure[Whitehead double]{\includegraphics[scale=.7]{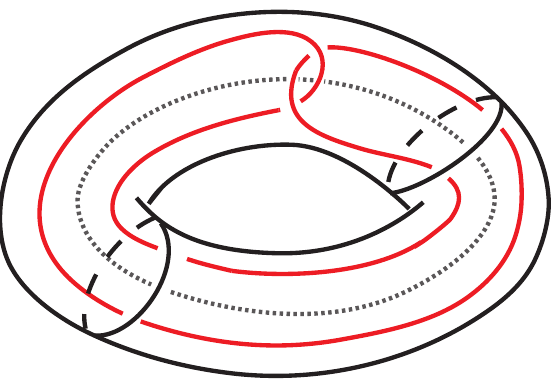}}
      \end{center}
\vspace*{-.1in}     \caption{}
  \label{Doubles}
\end{figure}
This description (including the splitting of $T^2$ into the product $S^1\x S^1$ and a fixed framing, {\it i.e.} a fixed trivialization of the normal bundle of $T$) determines this pair of tori up to isotopy. The component tori in $B_T$ are nullhomologous in $T\x D^2$. If one does $\pm 1$ surgery to one of the two tori of $B_T$, it turns the other into the Whitehead double of $T$ in $T\x D^2$. Often, surgery on the other torus will then change the smooth structure of the ambient manifold. For example, if $T$ is an elliptic fiber in the rational elliptic surface $E(1)$, a $1/n$ surgery on the second torus (with respect to an appropriate framing) will give the result of knot surgery using an $n$-twist knot (\cite{KL4M})

As we have seen in Lemma \ref{null}, one can not count on finding tori such as $T$. Instead, we seek a manifold smaller than $T^2\x D^2$ which still contains the Bing tori $B_T$. If we view $B_T\C S^1\x (S^1\x D^2)$, and write the first $S^1$ as $I_1\cup I_2$ ($I_1\cap I_2\cong S^0$), and the second $S^1$ as $J_1\cup J_2$ and consider $T_0 = S^1\x S^1\- (I_2\x J_2)$ (see Figure~2), then $B_T\cap (T_0\x D^2)$ consists of a pair of punctured tori, and $\bd(B_T\cap (T_0\x D^2))$ is a link in $\bd(T_0\x D^2) \cong S^1\!\x S^2\, \# \, S^1\!\x S^2$.

\begin{figure}[ht]
\begin{center}\includegraphics[scale=.8]{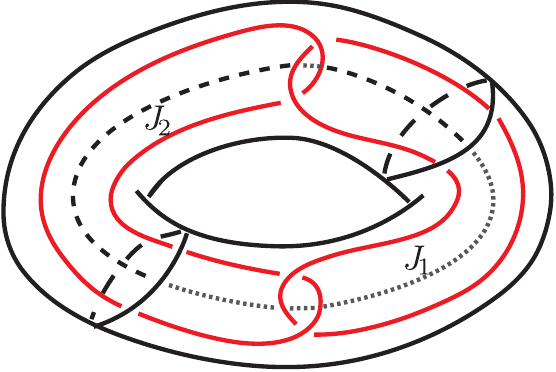}
\end{center}\vspace*{-.1in}
\caption{}
\label{J1J2}
\end{figure}

The intersection of $B_T$ with $(I_2\x J_2)\x D^2$ is a pair of disks $I_2\x$ (intersection of the Bing double link with $J_2\x D^2$). Its boundary is the double of the intersection of the Bing double link with $J_2\x D^2$; i.e. the (1-dimensional) Bing double of $\bd(I_2\x J_2\x \{0\})$. In $\bd(T_0\x D^2) \cong S^1\!\x S^2\, \# \, S^1\!\x S^2$ this boundary is shown in Figure~\ref{bdry}(a) or equivalently, Figure~\ref{bdry}(b).

\begin{figure}[ht]
\begin{center}
     \subfigure[]{\includegraphics[scale=.3]{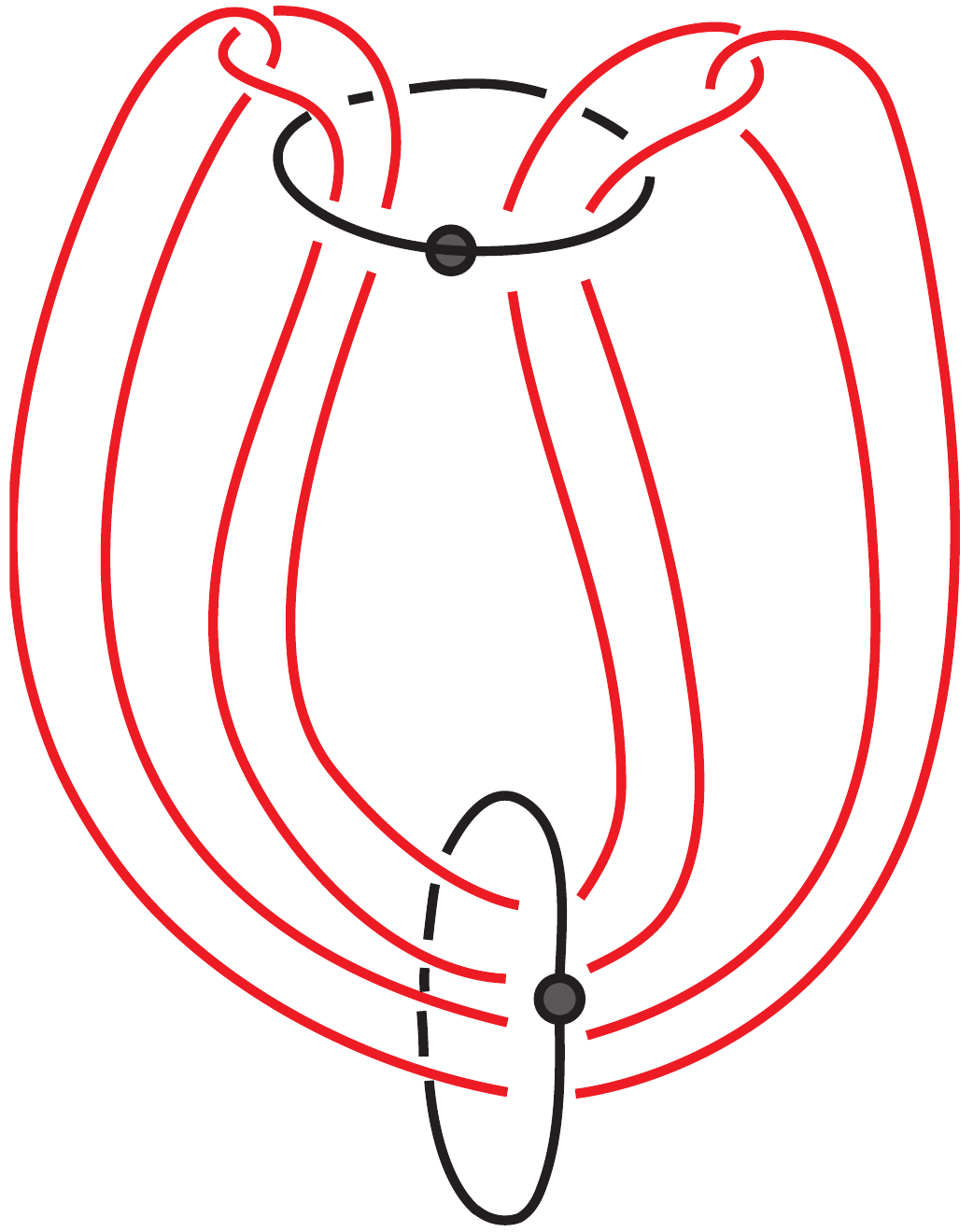}}
     \hspace{.5in}
    \subfigure[]{\includegraphics[scale=.3]{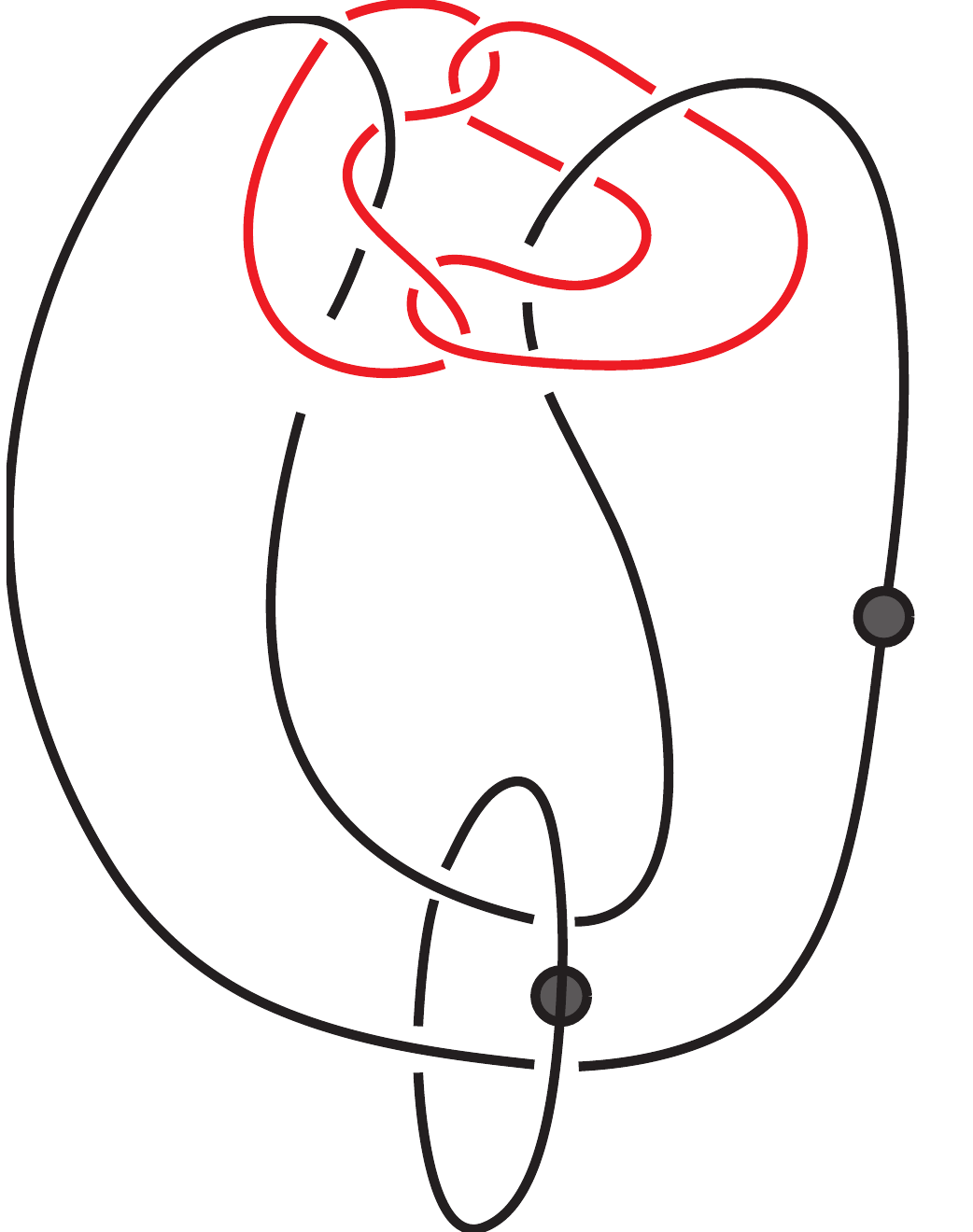}}
      \end{center}
\vspace*{-.2in}     \caption{}
  \label{bdry}
\end{figure}

Performing $0$-framed surgeries on these boundary circles (with respect to the framing in Figure~3), we obtain a manifold, shown in Figure~\ref{A}(a) which contains a pair of self-intersection $0$ tori. We call this manifold $A$. It is given equivalently by Figure~\ref{A}(b). Figure~\ref{A}(b) points out that $A$ is obtained from the $4$-ball by attaching a pair of $2$-handles and then carving out a pair of $2$-handles. The Euler characteristic of $A$ is $e=1$ and its betti numbers are $b_1(A)=2=b_2(A)$.

\begin{figure}[ht]
\begin{center}
     \subfigure[]{\includegraphics[scale=.3]{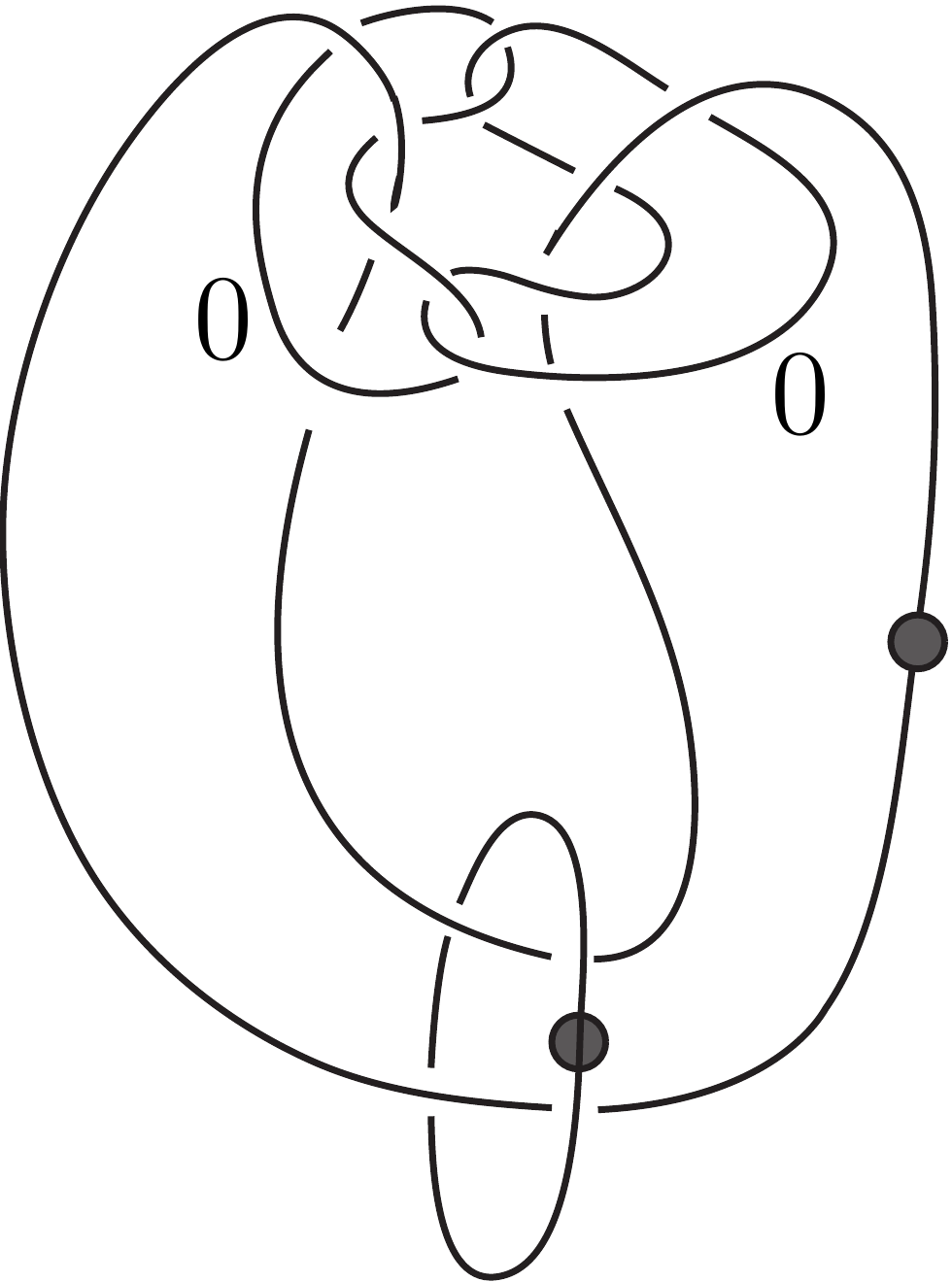}}
     \hspace{.3in}
    \subfigure[]{\includegraphics[scale=1.5]{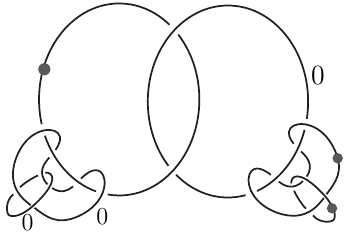}}
      \end{center}
\vspace*{-.1in}     \caption{The manifold $A$}
  \label{A}
\end{figure}

Our above discussion shows that $A$ embeds in $T^2\x D^2$ and contains the Bing double $B_T$ of the core torus. We continue to refer to the pair of tori in $A$ as $B_T$, even though, in general, they do not constitute a Bing double of some other torus.

\begin{lem}\label{SxT} The manifold $A$ embeds in $T^2\x S^2$ as the complement of a pair of transversely intersecting tori of self-intersection $0$. 
\end{lem}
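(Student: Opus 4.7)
The approach is to construct the embedding directly from the decomposition $T^2\x S^2=(T^2\x D^2_+)\cup_{T^3}(T^2\x D^2_-)$ coming from the hemisphere splitting $S^2=D^2_+\cup D^2_-$, and then to identify the complement as a plumbing of two trivial $D^2$-bundles over tori at a single transverse intersection.

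First, I would place $A$ inside the first factor $T^2\x D^2_+$ as already set up in the preceding discussion: the $0$-framed surgeries on the two boundary circles are realized as attachments of $2$-handles whose cores are properly embedded disks in the complementary ball $(I_2\x J_2)\x D^2_+\cong D^4$, capping off the Bing link sitting on $\partial D^4\C T^3$. Composing with the inclusion $T^2\x D^2_+\hookrightarrow T^2\x S^2$ then gives an embedding of $A$ into $T^2\x S^2$ whose complement splits as $((T^2\x D^2_+)\-A)\cup(T^2\x D^2_-)$ along (part of) $T^3$.

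Next I would exhibit two tori. Take $T_1=T^2\x\{*_-\}$ for a point $*_-\in\mathrm{int}(D^2_-)$, which is a torus of self-intersection $0$ representing $F=[T^2\x\pt]$ and lies entirely in the complement of $A$. For $T_2$ I take a torus representing the class $S=[\pt\x S^2]$, built by taking a fiber sphere $\{q\}\x S^2$ and stabilizing by attaching a $1$-handle along a suitable nontrivial loop on $T^2$; the stabilization can be arranged inside the complement of $A$ so that $T_2$ meets $T_1$ transversely in the single point $(q,*_-)$ and has self-intersection $0$. Since $F\cdot S=1$, this forces one transverse intersection, matching the Euler characteristic consistency check $\chi(A)+\chi(N(T_1\cup T_2))=1+(-T_1\cdot T_2)=1-1=0=\chi(T^2\x S^2)$.

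The heart of the proof is then the verification that the regular neighborhood $N(T_1\cup T_2)$ really coincides with $(T^2\x S^2)\-A$. I would check this by Kirby calculus bookkeeping: the piece $T^2\x D^2_-$ is a regular neighborhood of $T_1$; the piece $(T^2\x D^2_+)\-A$ is $D^4$ with tubular neighborhoods of the two Bing-capping disks carved out; and the way these two pieces are glued along the solid torus $(I_2\x J_2)\x S^1\C T^3$, together with the contribution of $T_2$'s stabilizing $1$-handle, precisely assembles into the plumbing of two $D^2$-bundles over tori at one transverse intersection.

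\textbf{Main obstacle.} The main technical step is the last one: correctly identifying how the complement of the Bing-capping disks in $D^4$ and the fiber $T^2\x D^2_-$ fit together along $T^3$, and checking that $T_2$'s stabilization can be chosen compatibly so that the handle decompositions match. Once the Bing link's position on $T^3$ and the framing data of the $2$-handles are tracked carefully, the identification with the plumbing neighborhood of $T_1\cup T_2$ should follow, and the handle counts of $A$ (namely $1$ zero-handle, $2$ one-handles, $2$ two-handles) match the expected dual decomposition of the complement.
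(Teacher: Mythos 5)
Your overall strategy — split $T^2\times S^2$ into two copies of $T^2\times D^2$, put $A$ in one piece, and identify the complement with a plumbing neighborhood of two tori, one a core torus $T^2\times\{*\}$ in the other piece and the other a tubed/stabilized fiber sphere representing $[\pt\times S^2]$ — is the same strategy the paper uses, and your two tori are the paper's $T$ and $S_T$ (your "stabilization by a $1$-handle" of $\{q\}\times S^2$ is exactly the paper's replacement of the meridional disk $D_T$ by the punctured torus $T_0'$, giving $S_T=D_T\cup T_0'$).

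However, the step you flag as the "main obstacle" is precisely the content of the lemma, and it is not actually carried out. Two things need to happen there, and the paper supplies concrete arguments for both. First, you need to identify $(T^2\times D^2_+)\setminus A$ explicitly, not just as "$D^4$ with two Bing-capping disks carved out": the paper observes this is $T_0\times D^2$, i.e., a regular neighborhood of the punctured torus $T_0'$, with the correct framing — this is what forces the genus-one stabilization of the fiber sphere to lie in the complement, rather than being "arranged." Without that identification, the assertion that $T_2$ can be pushed off $A$ while keeping $[T_2]=S$ and $T_2\cdot T_2=0$ is unjustified; the framing of the stabilizing tube matters. Second, even after identifying both pieces of the complement, one has to check that the gluing of $(T^2\times D^2_-)$ to $N(T_0')$ really assembles into the plumbing of two trivial $D^2$-bundles over tori and leaves no room unaccounted for; the paper does this by exhibiting a handlebody picture of the tubular neighborhood $N$ of $T\cup S_T$ (Figure~\ref{N}) and cancelling the Borromean triple against the one in Figure~\ref{A}(b) to recover the double of $T^2\times D^2$. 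Your Euler characteristic check is a useful sanity test but does not substitute for this handle cancellation. So the approach is right and parallels the paper, but the identification of the complement with the plumbing is the whole proof, and as written it remains a sketch.
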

\begin{proof} Write $T^2\x S^2$ as $(S^1\x (S^1\x D^2))\cup (S^1\x (S^1\x D^2))$. We know that $A$ embeds in, say, the second $T^2\x D^2$. In Figure~\ref{A}(a), if we remove the two $2$-handles, we obtain $T_0\x D^2$. The two $2$-handles are attached along the Bing double of the circle $\b=\bd T_0\x\{ 0\}$. If instead, we attach a $0$-framed $2$-handle along $\b$ we obtain $T^2\x D^2$. This implies that $(T^2\x D^2)\- A$ is the complement in the $2$-handle, $D^2\x D^2$, of the core disks of the $2$-handles attached to obtain $A$. This complement is thus the result of attaching two $1$-handles to the $4$-ball. This is precisely the boundary connected sum of two copies of $S^1\x B^3$, {\it i.e.} $T_0\x D^2$. Using the notation in Figure~\ref{J1J2} and above, $(T^2\x D^2)\- (T_0\x D^2)=I_2\x J_2\x D^2$. The complement of the two $2$-handles dug out of this is a neighborhood of $\{pt\}\x$ the shaded punctured torus in Figure~\ref{S^2xT^2}(b). 

Thus the two tori referred to in the lemma are illustrated in Figure~\ref{S^2xT^2}. One of these tori, $T$, is $S^1$ times the core circle in Figure~5(a), and the second torus, $S_T = D_T\cup T_0'$, where $D_T$ is $\{pt\}$ times the shaded meridional disk in Figure~\ref{S^2xT^2}(a), and $T_0'$ is $\{pt\}$ times the shaded punctured torus in Figure~\ref{S^2xT^2}(b). Note that $S_T$ represents the homology class of $\{ pt\}\x S^2$.

\begin{figure}[ht]
\begin{center}\includegraphics[scale=1.5]{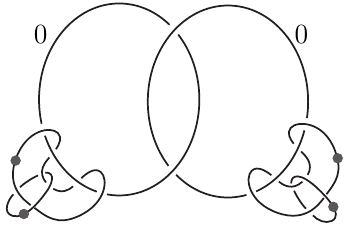}
\end{center}\vspace*{-.1in}
\caption{}
\label{N}
\end{figure}

From a Kirby calculus point of view, a depiction of a neighborhood $N$ of these two tori is shown in Figure~\ref{N}. Take its union with $A$ as seen in Figure~\ref{A}(b). The Borromean triple on the left side of Figure~\ref{A}(b) cancels with the corresponding triple in Figure~\ref{N}). We are left with the double of $T^2\x D^2$, i.e. $T^2\x S^2$. 
 \end{proof}

\begin{figure}[ht]
\begin{center}
     \subfigure[]{\includegraphics[scale=.95]{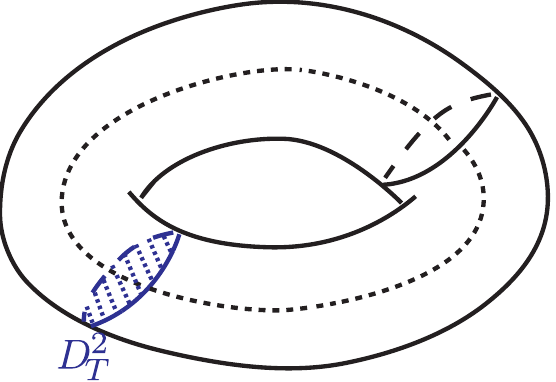}}
     \hspace{.2in}
    \subfigure[]{\includegraphics[scale=.95]{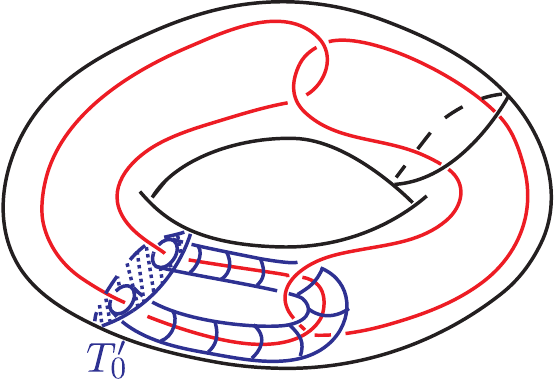}}
      \end{center}
\vspace*{-.1in}     \caption{}
  \label{S^2xT^2}
\end{figure}

\begin{cor}\label{cor1} The manifold $A$ is the complement of  the punctured torus $S_T\- D^2 =\{ pt \} \x T_0'$ in $S^1\x S^1 \x D^2$.
\qed
\end{cor}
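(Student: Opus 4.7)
The plan is to deduce Corollary~\ref{cor1} almost immediately from Lemma~\ref{SxT} by separating the roles played by the two tori $T$ and $S_T$ in the decomposition of $T^2\x S^2$. The key observation is that once the closed torus $T$ is removed from $T^2\x S^2$, its complement is exactly $T^2\x D^2 = S^1\x S^1\x D^2$, and the remaining piece of $S_T$ inside this $T^2\x D^2$ is the punctured torus $T_0'$.

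First, I would invoke the description of $T$ in the proof of Lemma~\ref{SxT}: the torus $T$ is $S^1$ times the core circle of the first $T^2\x D^2$ factor in the decomposition $T^2\x S^2 = (S^1\x(S^1\x D^2))\cup(S^1\x(S^1\x D^2))$. Its tubular neighborhood is the first copy of $T^2\x D^2$, and thus $T^2\x S^2\-\nu(T)$ is canonically the second copy, which I identify with $S^1\x S^1\x D^2$. This is the ambient manifold named in the corollary.

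Next I would track the second surface. By construction $S_T = D_T\cup T_0'$, where $D_T$ is the meridional disk to $T$ (sitting inside $\nu(T)$) and $T_0'$ is the $\{pt\}\x$(punctured torus) piece sitting in the second copy of $T^2\x D^2$. Since $S_T$ meets $T$ transversely in a single point — precisely the center of $D_T$ — removing $\nu(T)$ from $T^2\x S^2$ excises exactly the disk $D_T$ from $S_T$, leaving $S_T\- D_T = \{pt\}\x T_0'$. Hence removing open tubular neighborhoods of both $T$ and $S_T$ from $T^2\x S^2$ is the same as removing an open tubular neighborhood of $T_0'$ from $S^1\x S^1\x D^2$, which is what the corollary asserts.

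The main technical point, and the only place I expect to have to be careful, is verifying that the normal framings match up: that the product framing used to form the tubular neighborhood of $S_T$ in $T^2\x S^2$ restricts, after deleting $\nu(T)$, to the obvious framing of $\{pt\}\x T_0'$ inside $S^1\x S^1\x D^2$. This follows from the explicit product decomposition used in Lemma~\ref{SxT} together with the fact that $D_T$ is a fiber disk of the normal $D^2$-bundle of $T$, so the restriction of $\nu(S_T)$ to $T^2\x S^2\-\nu(T)$ is indeed $\nu(T_0')$ in the stated $T^2\x D^2$.
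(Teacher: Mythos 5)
Your proposal is correct and is essentially the paper's own (implicit) argument: the corollary is stated with a \qed because it is already contained in the proof of Lemma~\ref{SxT}, where it is established that $A\subset T^2\times D^2$ with $(T^2\times D^2)\- A$ identified as a tubular neighborhood $T_0'\times D^2$ of the punctured torus $\{pt\}\times T_0'$, and $T^2\times S^2$ is then assembled by gluing on $\nu(T)$. You simply run that construction backwards: start from the statement of Lemma~\ref{SxT}, delete $\nu(T)$ to recover $T^2\times D^2$, and note that the residue of $S_T$ is exactly $S_T\- D_T=\{pt\}\times T_0'$, with framings matching because of the product structure. Same content, just read in the opposite direction.
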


In Figure~\ref{ALabelled} we again see $A$, and using this figure, we can describe the tori in $B_T$. It shows the Borromean $2$-handles whose cocores have boundary circles $x$ and $y$. Denote disks which comprise the cores of these handles by $D_x$ and $D_y$.
Figure~\ref{ALabelled} also shows the circles $a$ and $b$ which go over the Borromean $1$-handles of $A$. Let $b_1^*$ and $b_2^*$ be the components of the Bing double of $b$.  Let $T_{(1)}$ and $T_{(2)}$ be the component tori of $B_T$. Then we have:

\begin{lem} \label{DescribeBT} The torus $T_{(i)}$ can be decomposed in the standard way into a union of a $0$-cell, two $1$-cells, and a $2$-cell, where the union of the $0$ and $1$-cells deformation retracts to a wedge of circles $C_1\vee C_2$ where $C_1$ is isotopic in $A$ to $a$ and $C_2$ is isotopic to $b_i^*$. The $2$-cell of $T_{(i)}$ is $D_x \ (i=1)$ or $D_y\  (i=2)$. \qed
\end{lem}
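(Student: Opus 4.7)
The plan is to trace through the construction of $A$ and identify the two tori of $B_T$ directly using their original description as the Bing double inside $T^2\x D^2$. Writing $T^2\x D^2 = S^1_a\x(S^1_b\x D^2)$ with $a,b$ the generators of $\pi_1(T^2)$, the Bing double $B_T$ is $S^1_a \x (\text{Bing double of the core circle } S^1_b\x\{0\})$, so each component torus has the product form $T_{(i)} = S^1_a\x b_i^*$ and carries the standard product CW structure with one $0$-cell, two $1$-cells $S^1_a\x\{\pt\}$ and $\{\pt\}\x b_i^*$, and a single $2$-cell.

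Next I would use the decomposition $T^2\x D^2 = (T_0\x D^2)\cup (I_2\x J_2\x D^2)$ from the paragraph before Lemma \ref{SxT}, together with the construction of $A$ as $T_0\x D^2$ with a pair of $2$-handles attached along the $1$-dimensional Bing double of $\bd(I_2\x J_2\x\{0\})$. Under this decomposition, each $T_{(i)}$ splits as the union of the punctured torus $T_{(i)}\cap (T_0\x D^2)$ sitting in $T_0\x D^2$ and a flat disk inside $I_2\x J_2\x D^2$ which, after the $0$-surgery that produced $A$, becomes the core of exactly one of the two attached $2$-handles, i.e.\ one of $D_x$ or $D_y$. This identifies the $2$-cell of $T_{(i)}$ with $D_x$ or $D_y$; the indexing convention in the statement just fixes which Bing component is paired with which $2$-handle.

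For the $1$-skeleton, the punctured torus $T_{(i)}\cap (T_0\x D^2)$ deformation retracts onto the wedge $C_1 \vee C_2$ with $C_1 = S^1_a\x\{\pt\}$ and $C_2=\{\pt\}\x b_i^*$. The loop $C_1$ is a standard $S^1_a$-factor in $T_0\x D^2$, which in the Kirby picture of Figure~\ref{ALabelled} goes once over the Borromean $1$-handle corresponding to the first $S^1$-factor; sliding the basepoint across $T_0$ gives an ambient isotopy in $A$ between $C_1$ and $a$. The loop $C_2$ is literally a pushoff of $b_i^*$ in the product direction and so is isotopic to $b_i^*$ inside $A$.

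I expect the only real difficulty to be bookkeeping rather than mathematics: confirming that, under the identifications coming from the Kirby moves connecting Figures \ref{bdry}, \ref{A}, and \ref{ALabelled}, the $2$-cell of the first Bing torus really is $D_x$ while that of the second is $D_y$, and that the framing conventions used in the $0$-surgery match those used to draw the circles $a$, $b$, $x$, $y$. All of this is already encoded in the diagrams but must be matched up Borromean handle by Borromean handle.
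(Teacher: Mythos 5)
Your proposal is correct and follows essentially the same line the paper intends: the lemma carries a \qed in its statement because it is meant to be read off directly from the construction of $A$ (the split of $B_T$ across $T_0\x D^2$ and $(I_2\x J_2)\x D^2$, the punctured torus retracting to $S^1_a\vee b_i^*$, and the $2$-handle core capping it off), and your argument unwinds exactly that construction. Your only caveat -- matching which Bing component pairs with $D_x$ versus $D_y$ -- is indeed just a labeling convention fixed by the indexing in Figure~\ref{ALabelled}, not a mathematical gap.
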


\begin{figure}[ht]
\begin{center}\includegraphics[scale=1.5]{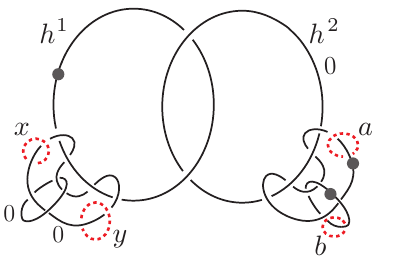}
\end{center}\vspace*{-.1in}
\caption{}
\label{ALabelled}
\end{figure}

It is also useful to interpret Corollary~\ref{cor1}  in light of Figure \ref{A}(b). In that figure, the large $2$-handle together with the Bing double $1$-handles give a handle description of $T^2\x D^2$. If we remove a $D^2$-fiber (thus obtaining $T_0\x D^2$) we add the large $1$-handle as in Figure \ref{A}(b). However, this is not what we have done --- instead we have removed a punctured torus. This means that we have removed a Bing double pair of $1$-handles which is the same as attaching the Bing double pair of $2$-handles in Figure \ref{A}(b).

Note that we see $B_T\C S^1\x$ (the solid torus in Figure~\ref{S^2xT^2}(b)) $=T^2\x D^2\C T^2\x S^2$. View $S^1\x S^2$ as $0$-framed surgery on an unknot in $S^3$. The Bing double of the core circle in Figure~\ref{S^2xT^2}(b) is the Bing double of the meridian to the $0$-framed unknot. Performing $0$-framed surgery on the two components of this Bing double gives us $\{0,0,0\}$-surgery on the Borromean rings, {\it viz.} $T^3$. Thus performing $S^1$ times these surgeries gives:

\begin{prop}\label{standard} One can perform surgery on the tori $B_T\C A\C T^2\x S^2$ to obtain the $4$-torus, $T^4$. \qed
\end{prop}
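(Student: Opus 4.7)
The plan is to reduce the four-dimensional torus surgery on $B_T$ to a three-dimensional Dehn surgery calculation, exploiting the product structure of $\TS$. First I would write $\TS = S^1 \x (S^1 \x S^2)$ so that $B_T$ lies in $S^1 \x V$, where $V \cong S^1 \x D^2$ is the solid torus shown in Figure~\ref{S^2xT^2}(b). By construction the two tori of $B_T$ take the form $S^1 \x B_1$ and $S^1 \x B_2$, where $B_1 \cup B_2 \C V$ is the standard Bing link, namely the Bing double of the core circle of $V$.

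Next I would observe that a tubular neighborhood of the torus $S^1 \x B_i$ inside $\TS$ is $S^1 \x \nu(B_i)$, so that, with respect to the obvious product framing, torus surgery on $S^1 \x B_i$ (with surgery coefficient $0$ on the $B_i$-pushoff and nothing done to the $S^1$-factor) is exactly $S^1$ times an ordinary $0$-framed Dehn surgery on $B_i$ in $S^1 \x S^2$. Carrying out both surgeries at once therefore gives $S^1 \x N$, where $N$ is the $3$-manifold obtained by simultaneous $0$-framed Dehn surgery on both components of the Bing link $B_1 \cup B_2 \C S^1 \x S^2$.

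The remaining step is the identification of $N$. Present $S^1 \x S^2$ as $0$-surgery on an unknot $U \C S^3$; then $B_1 \cup B_2$ appears in $S^3$ as the Bing double of the meridian $\mu_U$ of $U$. A short isotopy/Kirby-calculus check shows that the three-component link $U \cup B_1 \cup B_2$ is the Borromean rings, so $N$ is $\{0,0,0\}$-surgery on the Borromean rings, which is the classical description of $T^3$. Hence the total result of the surgery is $S^1 \x T^3 = T^4$.

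The main obstacle I anticipate is the verification that $U \cup B_1 \cup B_2$ really is the Borromean rings; once that is in hand, the rest of the argument is bookkeeping in the product decomposition together with the well-known surgery presentation of $T^3$.
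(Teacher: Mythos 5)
Your proposal is correct and follows essentially the same route as the paper: present $S^1\times S^2$ as $0$-surgery on an unknot $U$, observe that the Bing double of the dual meridian together with $U$ forms the Borromean rings, conclude that the simultaneous $0$-surgeries yield $T^3$, and then take the product with $S^1$ to get $T^4$. The only difference is cosmetic: you spell out the reduction to a three-dimensional Dehn surgery computation slightly more formally than the paper does.
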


Later we will be interested in other surgeries on $B_T$. We call the surgeries of Proposition~\ref{standard} the {\it standard} surgeries on $B_T$.
Conversely, {\it standard} surgeries on the corresponding pair of tori $\wB_T\C T^4$ yields $T^2\x S^2$. Furthermore, $\wB_T$ is a pair of disjoint Lagrangian tori in $T^4$, $S^1$ times two of the generating circles of $T^3$. The pair of tori of Lemma~\ref{SxT} can also be identified in $T^4$ after the surgeries. The first torus, $S^1$ times the core circle in Figure~5(a) becomes $S^1$ times the third generating circle of $T^3$. Call this torus $T_T$. The other torus intersects $T_T$ once and is disjoint from $B_T$. We call it $T_S$. It is the dual generating torus of $T^4$.  The complement of these tori in $T^4$ is $T_0\x T_0$. We thus have:

\begin{prop}\label{T_0->A} The standard surgeries on the pair of Lagrangian tori $\wB_T$ in $\TO$ give rise to $A$, and conversely, the standard surgeries on $B_T\C A$ yield $\TO$.\qed
\end{prop}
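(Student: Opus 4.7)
The plan is to combine Proposition~\ref{standard} with Lemma~\ref{SxT}, exploiting the fact that the standard surgeries are supported in a tubular neighborhood of $B_T$ (respectively $\wB_T$) and are therefore performed in the complement of the pair of ``large'' tori $T\cup S_T\C T^2\x S^2$ (respectively $T_T\cup T_S\C T^4$). In other words, the surgery diffeomorphism $(T^2\x S^2)\rightsquigarrow T^4$ produced by Proposition~\ref{standard} can be restricted to the complement of these tori, and that restriction is the required identification.

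First I would recall from Lemma~\ref{SxT} that $A$ is precisely the complement of a tubular neighborhood of $T\cup S_T$ in $T^2\x S^2$, so that $B_T\C A$ is in particular disjoint from $T\cup S_T$. Proposition~\ref{standard} then provides standard surgeries on $B_T$ that convert $T^2\x S^2$ into $T^4$. Because these surgeries are supported in a tubular neighborhood of $B_T$, they act as the identity on a neighborhood of $T\cup S_T$, and the images of $T$ and $S_T$ in $T^4$ are exactly $T_T$ and $T_S$, as identified in the discussion preceding the proposition. Restricting the surgery to the complement of a tubular neighborhood of $T\cup S_T$ therefore identifies the surgered $A$ with $T^4$ minus a tubular neighborhood of $T_T\cup T_S$, which is $\TO$. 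This yields the first half of the proposition.

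For the converse, I would use that the standard surgeries are reversible: the inverse operation is a standard surgery performed on the core tori of the replaced solid tori, and by construction these core tori in $T^4$ are exactly $\wB_T$. The reverse surgeries on $\wB_T\C T^4$ recover $T^2\x S^2$, and because they are supported in $\TO$, disjoint from $T_T\cup T_S$, they identify the surgered $\TO$ with $A$.

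The only real obstacle is bookkeeping: verifying that the surgered images of $T\cup S_T$ in $T^4$ are $T_T\cup T_S$ and that the standard surgeries on $B_T$ and $\wB_T$ are indeed supported away from these large tori. Both points are essentially built into the identifications given in the paragraph before the proposition (the description of $T_T$ as $S^1$ times the third generating circle of $T^3$, of $T_S$ as its dual, and of their complement as $T_0\x T_0$), so the argument reduces to unpacking these identifications and applying Proposition~\ref{standard}.
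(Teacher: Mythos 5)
Your proposal is correct and follows essentially the same route as the paper: Proposition~\ref{T_0->A} carries a \qed because the preceding paragraph already supplies the argument—apply Proposition~\ref{standard} to $B_T\C A\C T^2\x S^2$, note that the surgeries are supported away from the interface tori $T\cup S_T$ whose images in $T^4$ are $T_T\cup T_S$ with complement $\TO$, and run the same identifications in reverse for the other direction. (One trivial bookkeeping slip: what you call ``the first half'' is actually the converse clause of the statement, but you do cover both directions.)
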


Thus the result of the standard surgeries on $T^2\x S^2$ is to transform $A$ into the complement of a transverse pair of generating tori $T_T=T^2\x\{pt\}$ and $T_S=\{pt\}\x T^2$ in $T^4$. The reason for this notation is that $T_S$ is the torus in $T^4$ which is sent to $S_T$ in $T^2\x S^2$ after standard surgeries on $\wB_T$, and $T_T$ is the torus that is sent to $T$.

These surgeries also transform the Bing tori in $B_T$ into the Lagrangian tori $\L_1 = S^1_1\x S^1_3$ and $\L_2 = S^1_1\x S^1_4$ in $T^4=T^2\x T^2=(S^1_1\x S^1_2)\x (S^1_3\x S^1_4)$. The surgeries on $\L_1$ and $\L_2$ are not Lagrangian surgeries in the sense of \cite{ADK}, and so one does not get an induced symplectic structure. Indeed, $T_0\x T_0$ is the complement of transversely intersecting symplectic tori in $T^4$, but after surgery, in $T^2\x S^2$, the complement of $A$ is the regular neighborhood of a pair of tori, one of which is not minimal genus and so cannot be symplectically embedded.

\section{The canonical class}

We now begin  to address question 2 from the previous section. What constraints are placed on a nullhomologous torus (or collection of nullhomologous tori) if they are to be useful for changing the smooth structure? We  limit our discussion to the case of rational surfaces $R=\mathbf{CP}{}^{2}\# \,k\,\overline{\mathbf{CP}}{}^{2}$, $k\le 8$. As mentioned above, for these manifolds Li and Liu have shown that the symplectic form is unique up to deformation and diffeomorphism, and hence for any symplectic form $\o_R$, the corresponding canonical class $K_R$ must satisfy $K_R\cdot\o_R<0$ (as is true for the standard Kahler form) \cite{LL}. Thus our goal should be to find nullhomologous tori in $R$ such that surgeries on them produce a symplectic manifold $(X,\o_X)$ homeomorphic to $R$ with $K_X\cdot \o_X >0$. Then $X$ cannot be diffeomorphic to $R$. One important constraint can be derived from the genus of a surface representing $K_X$.

Since $K_R$ is a negative class (i.e. it intersects $\o_R$ negatively) $K_R$ is not represented by a symplectic surface. Rather,  it is $-K_R=3h-\sum e_i$ that is. The adjunction formula then applies to $-K_R$ and states that for $g$ the genus of $\pm K_R$,
\[ 2g-2=(-K_R)\cdot(-K_R) + (-K_R)\cdot K_R \]
so $g=1$, as we know. If we can find a symplectic structure $\o_X$ as above, then $K_X$ is positive and will be represented by a symplectic surface (cf. \cite{T} and \cite{LL}). Thus we can apply the adjunction formula to $K_X$:
\[ 2g-2=K_X\cdot K_X +K_X\cdot K_X \]
and we see that $K_X$ is represented by symplectic surface of genus 
\[ g=K_X^2 +1= c_1^2(X) +1 = 10-k\]
In particular, for $R=\CPC$, $k=3$ and $g=7$. Thus if we have a collection of nullhomologous tori such that surgery on them changes $R$ to $X$ they must serve to increase the genus of the canonical class by $6$. Notice that this is precisely the number of surgeries needed to go from the model $Sym^2(\Sig_3)$ to $X$!

\section {Pinwheels}

We next address the question of how one can go about finding nullhomologous tori in $\CPC$ which will serve to raise the genus of the canonical class from $1$ to $7$. As a warmup, referring to Figure \ref{S^2xT^2}(b), note that surgery on one of the Bing tori intersecting a  disk $\{pt\}\x \{pt\}\x D^2\C S^1\x S^1\x D^2$ will not embed after the surgery, however there is an obvious puntured torus, $T_0'$ which will. Thus, surgery will force the genus of this disk to go up by one. The technique presented in \cite{Pin} will be useful for embedding nullhomologous tori so that surgery on them accomplishes this. Rather than repeating what was already presented in that paper, we will illustrate the idea of a pinwheel structure in the case of $\CP$ and $\CPC$ and refer the reader to \cite{Pin} for specifics and generalizations.

Toward this end consider a linear $T^2$-action on $\CP$. Its orbit space is a polygon as shown in Figure \ref{CP}(a). The vertices are images of fixed points, while an edge labeled $(p,q)$ has isotropy group the circle group described in polar coordinates by $G(p,q)=\{(\vp,\vt)\mid p\vp+q\vt=0, gcd(p,q)=1\}$, and the interior points are the images of principal orbits. We have divided this figure into three quadrilaterals; each is the image of a standard $4$-ball coordinate neighborhood of one of the three fixed points of this torus action. In Figure \ref{CP}(a) we have also noted the self-intersection numbers (all $+1$) of the $2$-spheres described by the edges.

\begin{figure}[ht]
\begin{center}
    \subfigure[$\CP$]{\includegraphics[scale=.9]{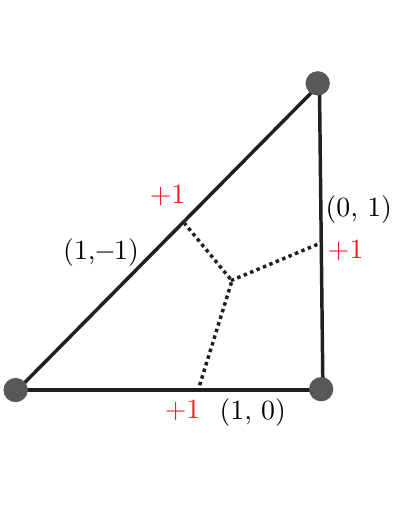}} \hspace{.25in}
    \subfigure[$\CP$ with pinwheel structure]{\includegraphics[scale=.9]{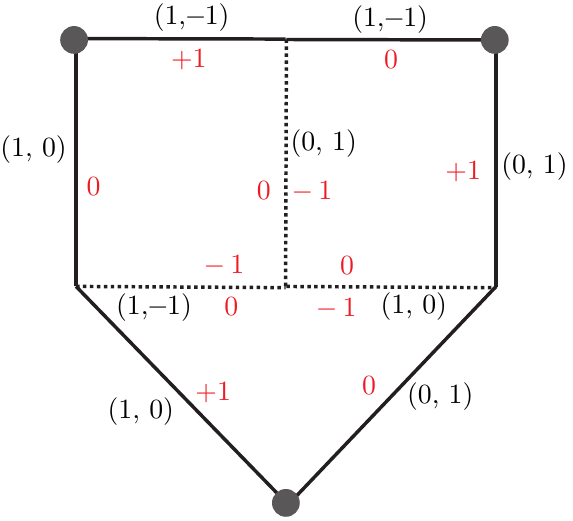}}
      \end{center}
\vspace*{-.1in}     \caption{}
  \label{CP}
\end{figure}

Let $\bbF$ be the rational ruled surface with a negative section $S_-$ of square $-1$. It is useful to view each of these $4$-balls as the complement in $\bbF$ of $S_-$ and a fiber $F$.
Figure \ref{CP}(b) indicates how these complements are glued together. (This example was originally described in a slightly different fashion in \cite{S}.) The extra labelings in this figure give the isotropy types and self-intersection numbers for the spheres $S_-$ and $F$ in a $T^2$ action on $\bbF$. Note that we have glued the boundary of a tubular neighborhood of the negative section of one $4$-ball to the neighborhood of a fiber in the next. Of course, strictly this cannot be done, but since in $\bbF$ the section $S_-$ and a fiber meet in a point, we need only glue together the boundaries of normal $S^1$-bundles restricted over punctured surfaces (genus $0$ in this case), and these restricted bundles are all trivial. Think of the boundary of our $4$-ball in $\bbF$ as
\[ \bd B^4 = (D^2\x S^1) \cup (T^2\x I) \cup (D^2\x S^1)  \]
where we are gluing together the pieces $D^2\x S^1$, one $4$-ball to the next. This process is illustrated in Figure~\ref{3fold}. In that figure, the wedge-shaped regions in the center denote neighborhoods of the intersection of $S_-$ and $F$ after the section and fiber have been deleted.

\begin{figure}[ht]
\begin{center}\includegraphics[scale=1.25]{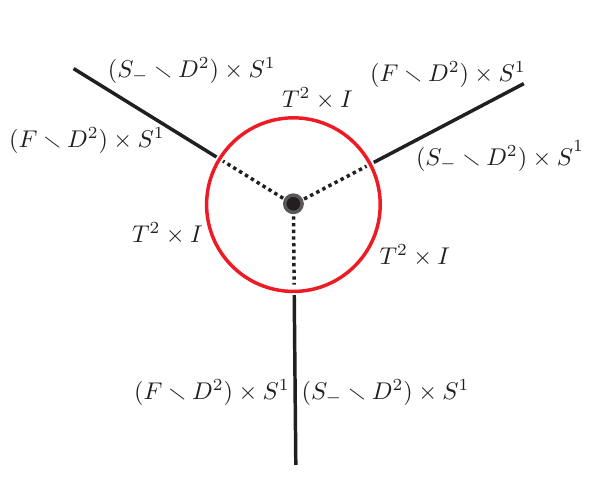}
\end{center}\vspace*{-.1in}
\caption{}
\label{3fold}
\end{figure}

 After gluing all three components this way, we obtain a manifold whose boundary is a $T^2$ bundle over a circle. If the total space of this bundle is $T^3$, then we can fill in the boundary with $T^2\x D^2$ to obtain a closed manifold. Of course this is what happens in our example of $\CP$. More generally, when there are three pieces, if the sum of the two Euler numbers of the normal bundles removed at each interface is $-1$ (as in Figure \ref{CP}(b)), then the boundary will be $T^3$. See \cite{Pin} for more general criteria. The spheres $S_-$ and $F$ are referred to as `interface surfaces'.

Each of the three $4$-ball components comprising the pinwheel structure of $\CP$ has the handlebody decomposition of Figure \ref{Hopf}(a). Removing the negative section $S_-$ from $\bbF$ leaves a $D^2$ bundle over $S^2$ with Euler number $1$ --- this gives the $2$-handle, and removing the fiber $F$ adds the $1$-handle. Figure \ref{Hopf}(b) shows normal circles $\mu_{S_-}$ and $\mu_F$ to the negative section and fiber of $\bbF$.

\begin{figure}[ht]
\begin{center}
    \subfigure[]{\includegraphics[scale=2]{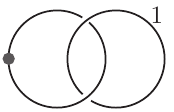}}\hspace{.25in}
    \subfigure[]{\includegraphics[scale=2]{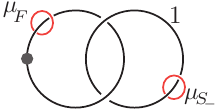}}      
    \end{center}
    \caption{}
  \label{Hopf}
\end{figure} 

The components of the pinwheel are glued together so that $\mu_F$ in one component is identified with $\mu_{S_-}$ in the next. Thus,  $\mu_F$ bounds a disk $D$ (the cocore of the $+1$-framed $2$-handle) in an adjacent pinwheel component. It also follows that the Bing double of $\mu_F$ bounds disjoint disks inside a small neighborhood of $D$. We can ambiently add $2$-handles to the Bing double of $\mu_F$ in the first pinwheel component while subtracting these $2$-handles from the second component. Since subtracting a $2$-handle is equivalent to attaching a $1$-handle, this handle-trading process, when done in each pinwheel component, turns each $4$-ball of Figure \ref{Hopf}(a) into the manifold shown in Figure \ref{InP2}.

\begin{figure}[ht]
\begin{center}\includegraphics[scale=1.2]{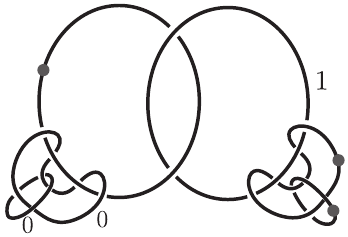}
\end{center}\vspace*{-.1in}
\caption{}
\label{InP2}
\end{figure}

The result of this handle-trading is to create a new pinwheel structure for $\CP$ where the interface surfaces are now tori rather than spheres. This process is shown schematically in Figure \ref{HT}.

\begin{figure}[ht]
\begin{center}\includegraphics[scale=.3]{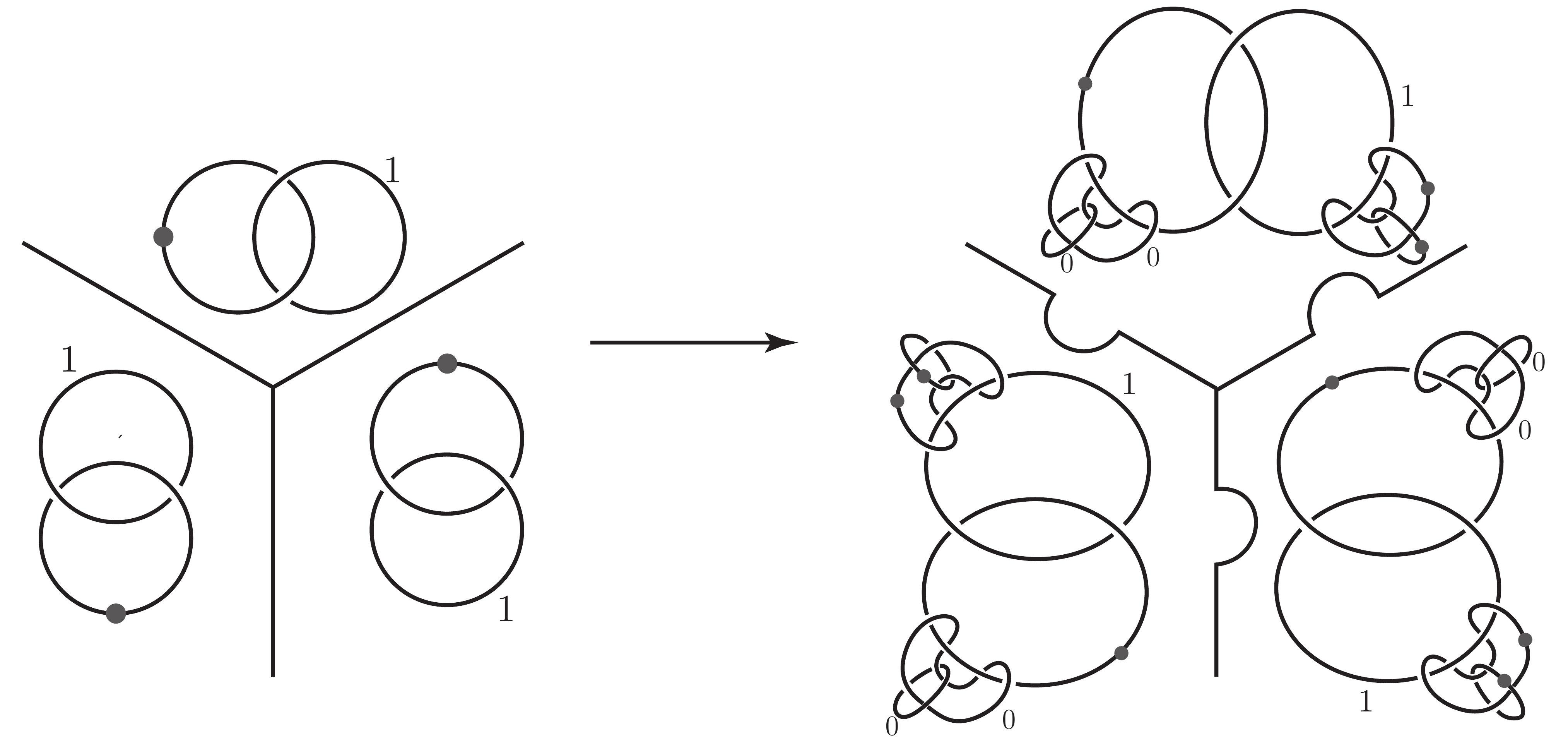}
\end{center}\vspace*{-.1in}
\caption{Handle-trading in $\CP$}
\label{HT}
\end{figure}

Unfortunately, these new pinwheel components do not contain copies of our manifold $A$. (Compare Figures \ref{InP2} and \ref{A}(b).) However, if we blow up each pinwheel component, we get pinwheel components $C_i$, $i=0,1,2$, as illustrated in Figure~\ref{BlownUp}, where all the handles are labeled. 
The new components $C_i$ clearly contain $A$. Blowing up three times accomplishes this for each pinwheel component. Thus $\CPC$ contains three copies of $A$, hence three copies of $B_T$ --- all together six nullhomologous tori. 

We need to see that surgery on these tori produces a symplectic manifold $X$ homeomorphic to $\CPC$ with $K_X\cdot\o_X>0$.  According to Proposition \ref{T_0->A}, standard surgeries on $B_T$ change $A$ into $T_0\x T_0$. So if we perform these surgeries in each pinwheel component, they become
\[ (T_0\x T_0) \# \CPb = (T^2\x T^2)\# \CPb \- (T^2\x \{ pt\}\cup \{ pt\}\x T^2)\]
Three of these get glued together in a pinwheel which is obtained from $\CPC$ via six standard surgeries. Since each component is obtained by removing a a pair of transversely intersecting symplectic surfaces from a symplectic manifold, it follows from a theorem of Symington \cite{S} that it gives a pinwheel structure for  a symplectic manifold $Q$. In \cite{S} this is called a {\it 3-fold sum} of symplectic manifolds. (Surely, $Q$ is $Sym^2(\Sig_3)$, but this is as yet unproved.) Furthermore, the tori $T$ and $S_T$ of Lemma~\ref{SxT} become Lagrangian in $Q$ as in Proposition \ref{T_0->A}.

\begin{figure}
\begin{center}\includegraphics[scale=1.5]{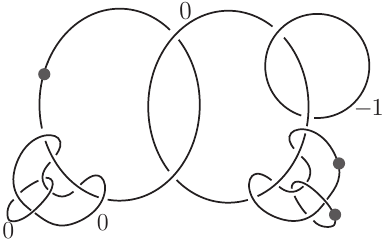}
\end{center}\vspace*{-.1in}
\caption{}
\label{BlownUp}
\end{figure}

Thus we have six Lagrangian tori in $Q$ upon which we can perform Luttinger surgeries to kill the first betti number of $Q$. In \cite{Pin} we showed that the result $X$ of these surgeries is simply connected. (One should not underestimate the importance of this calculation. Many mistakes in the literature have been made at this point.)
Hence, $X$ is the symplectic manifold that we have sought. Let $L_{1,i}$ and $L_{2,i}$ denote the Lagrangian tori in $Q$ obtained from the tori  $T_{(1),i}$ and $T_{(2),i}$ of the $i$th copy of $B_T$ ($i=1,2,3$) after the standard surgeries on $\CPC$. Hence 
\[ \CPC\- \cup_{i=1}^{3}(T_{1,i} \cup T_{2,i})
= Q\- \cup_{i=1}^{3}(L_{1,i} \cup L_{2,i}) = Z,\] say. 
This means that $X$ is the union of $Z$ with six copies of $T^2\x D^2$. That is to say, $X$ is the result of six surgeries on the nullhomologous tori $T_{(1),i}$ and $T_{(2),i}$ in $\CPC$. Combining this with our discussion from \S \ref{RE} gives:

\begin{thm}[\cite{Pin}] There are six nullhomologous tori embedded in $\CPC$ upon which surgery gives rise to an infinite family of mutually nondiffeomorphic $4$-manifolds homeomorphic to $\CPC$.\qed
\end{thm}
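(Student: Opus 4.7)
The plan is to package the six nullhomologous tori already produced in the pinwheel discussion above into the reverse-engineering recipe of \S\ref{RE} and to read the infinite family off from the Morgan--Mrowka--Szabo formula \eqref{MMS}. First I would take the six tori to be $T_{(1),i},T_{(2),i}$, $i=1,2,3$, where the $i$th pair is the pair of Bing tori of Lemma~\ref{DescribeBT} sitting inside the copy of $A$ embedded in the $i$th blown-up pinwheel component $C_i$ of $\CPC$; each is nullhomologous in $A$ and hence in $\CPC$.

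Second, I would realize a single symplectic exotic $X$ via one specific surgery on each of these six tori. By Proposition~\ref{T_0->A} the standard surgeries on each $B_T$ convert $A$ into $\TO$, so the simultaneous standard surgeries turn each pinwheel component $C_i$ into a blowup of the complement of a transverse pair of symplectic tori in $T^4$; Symington's three-fold symplectic sum \cite{S} glues these into a symplectic manifold $Q$ in which the tori $T_{(j),i}$ reappear as six disjoint Lagrangian tori $L_{j,i}$. Performing $\pm 1$ Luttinger surgeries on the $L_{j,i}$ produces a symplectic $X$, and the composite of the two steps on each torus amounts to a single surgery on the original $T_{(j),i}\subset \CPC$. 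The canonical-class analysis from the preceding section together with \cite{LL} gives $K_X\cdot\o_X>0$, so once simple connectivity is established $X$ will be homeomorphic but not diffeomorphic to $\CPC$.

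To produce the infinite family $X_{1/n}$, I would fix the effective coefficients on five of the six tori and replace the sixth Luttinger coefficient by $1/n$, $n\in\Z$. Applying diagram \eqref{diagram} to the distinguished sixth torus (viewed inside $X$) identifies the intermediate $0$-surgery manifold $X_0$ with the symplectic manifold obtained by undoing only the final Luttinger surgery; since $b^+(X_0)=b^+(X)+1=2$, one has $\ssw_{X_0}\ne 0$, and the formula \eqref{MMS} specializes to $\ssw_{X_{1/n}}=\ssw_X+n\,\ssw_{X_0}$. Consequently the Seiberg--Witten invariants of the $X_{1/n}$ are pairwise distinct, and infinitely many of the $X_{1/n}$ are pairwise nondiffeomorphic. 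The main obstacle is verifying that each $X_{1/n}$ is simply connected: the case $n=\pm 1$ is exactly the delicate $\pi_1$-calculation of \cite{Pin}, and for general $n$ one must check that the handle relations coming from the other five Luttinger surgeries, read off the diagrams of Figures~\ref{A} and \ref{BlownUp}, still suffice to kill the $\Z/n$-flavored element that the $1/n$-surgery threatens to introduce. This $\pi_1$-check is the step I expect to occupy the bulk of a full proof.
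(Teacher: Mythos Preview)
Your proposal is correct and follows essentially the same approach as the paper: the theorem carries a \qed\ because its proof is precisely the accumulated content of the pinwheel section together with the reverse-engineering recipe of \S\ref{RE}, and you have recapitulated that argument faithfully. Your identification of the $\pi_1$-calculation as the crux is also on target; the paper likewise flags it as delicate and defers it to \cite{Pin}, noting in passing that once simple connectivity is known for the Luttinger output $X$, the same van Kampen relations (with $b$ already killed by the other five surgeries) dispose of the $1/n$ case as well.
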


\section{Reducing the number of tori}

The goal of this section is to prove Theorem \ref{one}, i.e. that we can find a {\it single} nullhomologous torus in $\CPC$ upon which surgery gives rise to an infinite family of mutually nondiffeomorphic $4$-manifolds homeomorphic to $\CPC$. To do so, we will show that surgery on less than all six of the nullhomologous tori  $T_{i,j}$ in $\CPC$ gives back $\CPC$ again; so its effect is to re-embed the remaining tori. In particular, surgery on any five of these tori will re-embed the remaining torus in $\CPC$, and surgery on this torus will give rise to the distinct smooth structures on $\CPC$. 

The key tool is the following simple lemma. To describe the situation, let $T$ be a self-intersection $0$ torus in a $4$-manifold $X$, and let $b$ be a loop on $T$. Let $S^1_b$ be a loop on the boundary $\bd N_T\cong T^3$ of the tubular neighborhood of $T$ which is homologous to $b$ in $N_T$. The choice of the loop $S^1_b$ trivializes the normal bundle of $T$ restricted over $b$, and this together with a trivialization of the normal bundle of $b$ in $T$ gives us a trivialization of the normal bundle of $S^1_b$ in $\bd N_T$.
Write this trivialized normal bundle as $S^1_b\x D^2\C \bd N_T$. 

 If $S^1_b$ bounds an embedded disk $\DD \C X\- Int (N_T)$ with a neighborhood $\DD\x D^2 \C X\- Int (N_T)$ such that $(\DD\x D^2) \cap \bd N_T=S^1_b\x D^2$ (with the framings agreeing on the boundary), we say that {\em $b$ has a $0$-vanishing cycle}, and we call $\DD$ a {\em $0$-framed disk for $b$}. Figure~\ref{van} gives a handlebody picture of the neighborhood of a self-intersection $0$ torus with a $0$-vanishing cycle.
\begin{figure}[ht]
\begin{center}\includegraphics[scale=1.75]{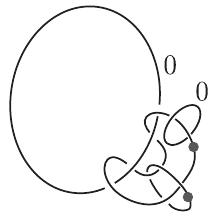}
\end{center}\vspace*{-.1in}
\caption{}
\label{van}
\end{figure}

\begin{lem} \label{0van} Let $T$ be a self-intersection $0$ torus in a $4$-manifold $X$, and let $b$ be a loop on $T$ which has a $0$-vanishing cycle. If $n$ is a nonzero integer, the result of $1/n$ surgery on $T$ with respect to $b$ is again $X$.
\end{lem}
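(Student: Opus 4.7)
The plan is to exhibit a diffeomorphism $X\to X_{1/n}$ by using the $0$-framed disk $\DD$ to absorb the twist that defines the surgery. Choose coordinates $(a,b,\mu)$ on $T^3=\bd N_T$ with $\mu=\mu_T$, so that $1/n$-surgery corresponds to cutting along $T^3$ and regluing by $\vp(a,b,\mu)=(a,b+n\mu,\mu)$. Hence $X_{1/n}\cong X$ if and only if $\vp$ can be realized by a self-diffeomorphism of either side of $T^3$. The map $\vp$ does not extend to $N_T=T^2\times D^2$ when $n\neq0$, because the associated loop of $T^2$-translations represents a nontrivial class in $\pi_1(\mathrm{Diff}(T^2))$, so I would look to the complementary side, where $\DD$ enters the picture.

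Let $Y=N_T\cup(\DD\times D^2)\subset X$. In the handle picture (Figure~\ref{van}), $\DD$ cancels the $b$ $1$-handle of $N_T$, so the $\pi_1(\mathrm{Diff}(T^2))$-obstruction dies in $Y$. The goal is to construct an explicit self-diffeomorphism $\wh\Psi$ of $Y$ realizing $\vp$ on the piece $T^3\setminus S^1_b\times D^2\subset\bd Y$ and isotopic to the identity on the remaining piece $\DD\times\bd D^2\subset\bd Y$.

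Concretely, parametrize $\DD$ by polar coordinates $(r,b)$ with $r=1$ on $\bd\DD=S^1_b$, and its normal disk by $(s_1,s_2)$ chosen so that $s_2$ records the $\mu$-direction along $S^1_b\times D^2\subset\bd N_T$. For a smooth cutoff $\rho\colon[0,1]\to[0,1]$ with $\rho\equiv1$ near $r=1$ and $\rho\equiv0$ in a neighborhood of $r=0$, set
\[ \Psi(r,b,s_1,s_2)=(r,\,b+n\rho(r)s_2,\,s_1,s_2). \]
The vanishing of $\rho$ near the center of $\DD$ resolves the polar-coordinate singularity, so $\Psi$ is a smooth diffeomorphism of $\DD\times D^2$; at $r=1$ it agrees with $\vp|_{S^1_b\times D^2}$; and on $\DD\times\bd D^2$ it is isotopic to the identity by linearly rescaling $n$. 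I would then extend $\Psi$ across $N_T$ by combining $\vp$ on a collar of $T^3\setminus S^1_b\times D^2$ with the tapered interpolation supplied by $\rho$ along $S^1_b\times D^2$; the tapering provides exactly the room needed to kill the extension obstruction on the $N_T$ side. Finally, absorb the identity-isotopy on $\DD\times\bd D^2$ into $\overline{X\setminus Y}$ via the isotopy extension theorem to obtain the global diffeomorphism.

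The main obstacle is the explicit construction of the extension across $N_T$: one must verify that the modified boundary automorphism — equal to $\vp$ outside a neighborhood of $S^1_b$ and the $\rho$-tapered version of the twist inside — extends to $T^2\times D^2$. This is where the $0$-framing of $\DD$ is essential; it is exactly what allows the formula $b+n\rho(r)s_2$ to smoothly kill the framing discrepancy at the center of $\DD$. Without a $0$-framed disk the extension would fail and the surgery would genuinely change the diffeomorphism type, as is the case for the tori discussed in the applications to $\CPC$.
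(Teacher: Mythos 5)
Your instinct — absorb the surgery twist into the handle $\DD\times D^2$ — is the same as the paper's, and your explicit formula on $\DD\times D^2$ is essentially the paper's $\Theta$. In fact your version is arguably cleaner: the paper writes $\Theta(w,z)=(wz^n,z)$, which, if $z^n$ is read as a complex power on the full disk $D^2$, fails to be surjective for $|z|<1$ (so, taken literally, it is not a diffeomorphism of $\DD\times D^2$); writing the twist as a rotation of $\DD$ by an angle that depends \emph{linearly} on the $\mu$-coordinate, as you do, removes that ambiguity. You did not actually need the cutoff $\rho$ for smoothness, though: since $s_2$ is a linear coordinate on the normal $D^2$ rather than an angular coordinate, the map $w\mapsto we^{2\pi i n s_2}$ is already a smooth diffeomorphism of $\DD\times D^2$ without any tapering, and it is isotopic to the identity on $\DD\times\bd D^2$ by shrinking $n$ to $0$.

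The gap is in the framing of the argument, specifically the step you flagged as ``the main obstacle.'' You aim to build a self-diffeomorphism $\hat\Psi$ of $Y=N_T\cup(\DD\times D^2)$ that equals $\vp$ on $T^3\setminus(S^1_b\times D^2)\subset\bd Y$, and propose to get it by extending $\Psi$ across $N_T$ ``by combining $\vp$ on a collar\dots with the tapered interpolation.'' But the only way to match $\vp$ on $T^3\setminus\nu(S^1_b)$ while matching $\Psi$ on $\nu(S^1_b)=\bd\DD\times D^2$ is to produce a diffeomorphism of $N_T$ restricting to $\vp$ on $\bd N_T$ (up to a constant rotation), and you correctly observed at the outset that no such extension exists — it is obstructed by the nontrivial class in $\pi_1(\mathrm{Diff}(T^2))$. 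The cutoff $\rho$ lives in the $\DD$-coordinate $r$, which has no meaning inside $N_T$, so it cannot kill this obstruction; the ``tapered interpolation'' one would want inside $N_T$ would have to send $b\mapsto b+n\sigma(\rho_0)\mu$ with $\sigma$ going from $1$ to $0$, and this fails to be well-defined in $\mu$ exactly because $\mu$ \emph{is} angular. The paper sidesteps this entirely: it does not try to realize $\vp$ as a boundary automorphism at all. Instead it compares $W=Y$ to the surgered piece $W'=T^2\times D^2\cup_{\vt}(\DD\times D^2)$ directly, using the \emph{identity} on $T^2\times D^2$ and pushing the whole twist onto $\DD\times D^2$ via $\Theta$; the commutativity $\Theta\circ\iota=\vt$ makes this a well-defined diffeomorphism $W\to W'$, and since its restriction to $\bd W$ is the identity on the $N_T$-side and isotopic to the identity on $\DD\times\bd D^2$, it extends over $X\setminus W$ by isotopy extension. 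Your $\Psi$ is exactly the right ingredient to play the role of $\Theta$ in that argument — pair it with the identity on $N_T$, not with $\vp$, and the proof goes through.
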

\begin{proof}  Let $W= N_T\cup_\iota (\DD\x D^2)$. The gluing $\iota: S^1_b\x D^2\to\bd\DD\x D^2$ is given by the $0$-framing of $S^1_b$ on $T$ and the normal framing of $T$ given by the fact that $T$ has self-intersection $0$ in $X$. Thus $\iota(t,z)=(t,z)$. It is not difficult to see that $W$ is diffeomorphic to $D^2\x S^2\,\# \,S^1\x D^3$. The result of $1/n$-surgery on $T$ with respect to $b$ and its framing $S^1_b$ is 
\begin{equation}\label{surg} T^2\x D^2 \cup_\vp S^1_a \x S^1_b\x \bd D^2 \x I \cup_\iota  (\DD\x D^2)\end{equation}
where $S^1_a$ is some framing circle for $a$, a complementary circle to $b$ on $T$, where $\bd D^2\x I$ is a collar on $\bd D^2$, and where 
\[ \vp: \bd(T^2\x D^2) = S^1_a \x S^1_b\x \bd D^2\to S^1_a \x S^1_b\x \bd D^2\x\{0\}\]
is $\vp(s,t,z)=(s,tz^n,z)$.  Thus \eqref{surg} becomes 
$ T^2\x D^2 \cup_\vt (\DD\x D^2)$
where $\vt: S^1_b\x D^2\to\bd\DD\x D^2$ is given by $\vt (t,z)=(tz^n,z)$. Hence
\[
\begin{array}{ccc}
T^2\x D^2  & \cup_\iota  &  (\DD\x D^2)  \\
\downarrow{Id} &   & \downarrow{\Theta} \\
T^2\x D^2  & \cup_\vt  &  (\DD\x D^2) 
\end{array}
\]
where $\Theta(w,z)= (wz^n,z)$. We need to be able to extend via the identity on the rest of $X$, and this can be done because $\Theta$ is isotopic to the identity on $\bd W\cap (\DD\x D^2) = \DD\x \bd D^2$.
\end{proof}

\begin{figure}[ht]
\begin{center}\includegraphics[scale=.4]{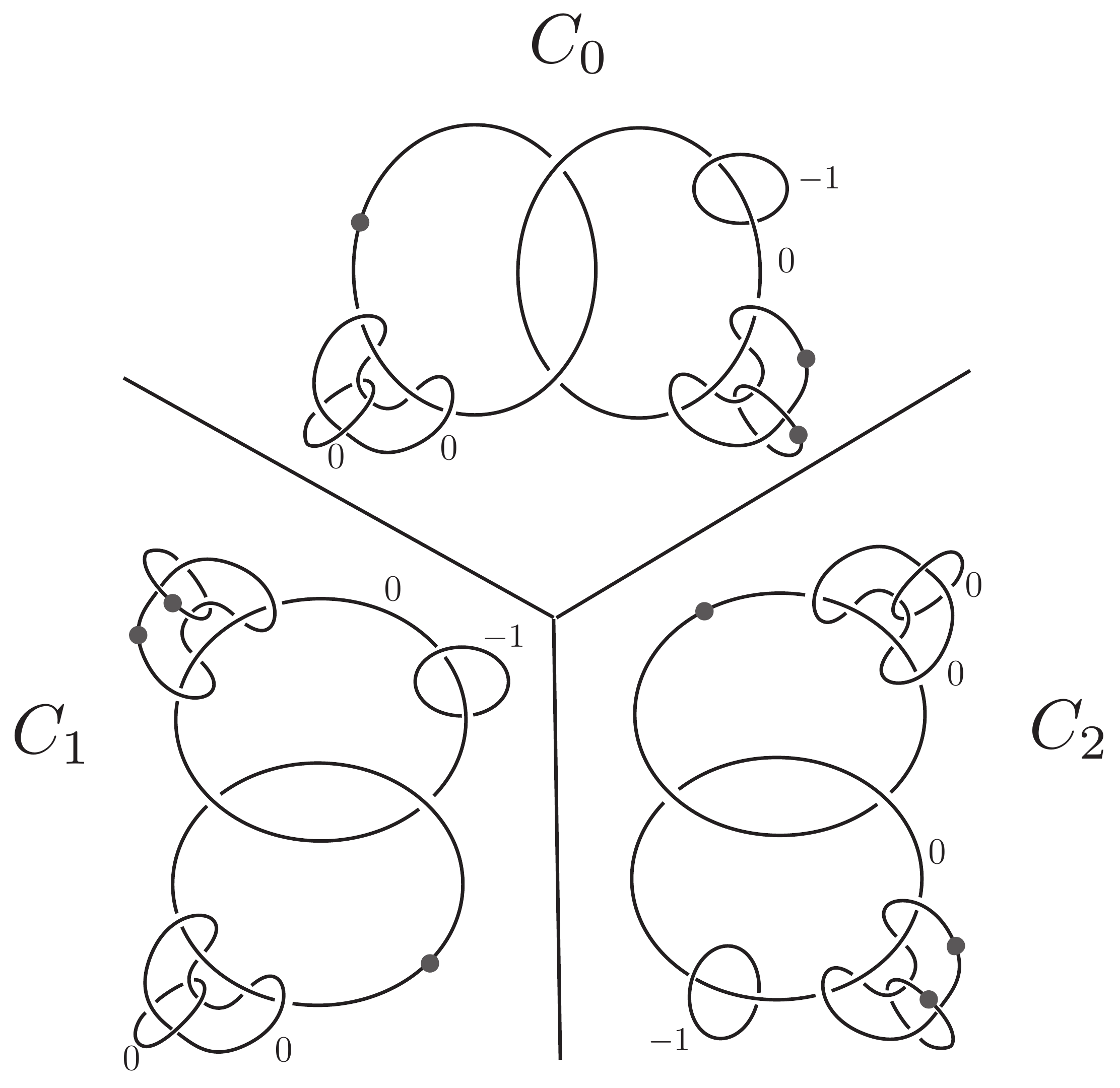}
\end{center}\vspace*{-.1in}
\caption{}
\label{Around}
\end{figure}

Label the pinwheel components of $\CPC$ counterclockwise as $C_i$ ($i=0,1,2$) as in Figure~\ref{Around}; so each $C_i$ has the handlebody description of Figure \ref{ALabelled}. We will use the notation from that figure with the obvious modification that the cocores of the Borromean $2$-handles of $C_i$ are $x_i$ and $y_i$, etc., and the Bing double circles for $b_i$ are $b_{i,1}^*$ and $b_{i,2}^*$.

Suppose that we plan to surger the Bing tori $B_{T,1}$ in $C_1$ and $B_{T,2}$ in $C_2$. The loops $a_2$ and $b_2$,  of $C_2$ are identified with the 
cocores of the Bing doubled $2$-handles, $x_1$ and $y_1$, in $C_1$; so these loops bound $0$-framed disks in $C_1$. Therefore, the Bing doubles $b_{2,1}^*$
and $b_{2,2}^*$ of $b_2$ bound disjoint $0$-framed disks $D^*_{2,i}$ (in a neighborhood of the $0$-framed disk bounded by $b_2$).

This means that the Bing tori $B_{T,2}$ satisfy the hypothesis of Lemma~\ref{0van} with respect to $\pm 1$ surgeries on the $b_{2,i}^*$. Thus there is a diffeomorphism $\Theta_2$ from $\CPC$ to the result of these two surgeries, and $\Theta_2$ has support in the union $C_2\cup D^*_{2,1}\cup D^*_{2,2}$. Notice that the disks 
$D^*_{2,1}$ and $D^*_{2,2}$ intersect the Bing tori in $C_1$ since they intersect 
$D_{x,1}\cup D_{y,1}$. (See Lemma~\ref{DescribeBT}.)

Nonetheless, $\Theta_2(B_{T,1})$ is a pair of Bing tori in $\Theta_2(\CPC)\cong\CPC$. The loops $a_1$ and $b_1$ do not lie in the support of $\Theta_2$, and their normal circles bound disks in $C_0$ analogously to the argument above.  Thus the tori of 
$\Theta_2(B_{T,1})$ satisfy the hypothesis of Lemma~\ref{0van} with respect to $\pm 1$ surgeries on the $b_{1,i}^*$, and there is a diffeomorphism $\Theta_1$ from $\Theta_2(\CPC)$ to the 
result of these two surgeries on $B_{T,1}$. 

We are left with the Bing tori $\Theta_1\Theta_2(B_{T,0})=\Theta_1(B_{T,0})$. Doing $\pm 1$ surgery on one of these tori as before re-embeds the other into a Whitehead double torus in $\CPC$. Thus after five surgeries, we still have $\CPC$, and this proves Theorem~\ref{one}.

Notice that the loops $a_0$ and $b_0$ are not in the support of the diffeomorphism $\Theta_1\Theta_2$, and that their Bing doubles $b_{0,1}^*$
and $b_{0,2}^*$ again bound disjoint $0$-framed disks 
$\Theta_1\Theta_2(D^*_{0,i})$ in  $\Theta_1\Theta_2(\CPC)\cong \CPC$. One can ask why this does not mean that the Bing tori $\Theta_1\Theta_2(B_{T,0})$ satisfy the hypothesis of Lemma~\ref{0van}. The answer is that $\Theta_1\Theta_2$ has moved these disks all the way around Figure~\ref{Around}, and in fact they must intersect $\Theta_1\Theta_2(B_{T,0})=\Theta_1(B_{T,0})$.

We note that although we have proved the existence of our single nullhomologous torus $T$ in $\CPC$, we do not identify it explicitly. It would be interesting to do so (but it is not clear that it would be useful).

Finally, consider the complement $W$ of a tubular neighborhood of our torus $T$ in $\CPC$ as a symplectic manifold with boundary. This manifold admits two very different symplectic structures. The first, with `negative canonical class', extends over $\CPC$, the other, with `positive canonical class', extends over the exotic symplectic manifold $X$.

\bigskip


\begin{thebibliography}{10}

\bibitem[1]{AP} A. Akhmedov \and B.D. Park, {\em Exotic smooth structures on small $4$-manifolds}, Invent. Math. {\bf 173} (2008), 209--223.
 
\bibitem[2]{ADK} D. Auroux, S. Donaldson, \and L. Katzarkov, {\em Luttinger surgery along Lagrangian tori and non-isotopy for singular symplectic plane curves}, Math. Ann. {\bf 326} (2003), 185--203.

\bibitem[3]{BK}  S. Baldridge \and P. Kirk, {\em A symplectic manifold homeomorphic but not diffeomorphic to $\CPC$}, Geom. Topol. {\bf 12} (2008), 919--940.

\bibitem[4]{RevEng} R. Fintushel, B.D. Park, \and R. Stern, {\em Reverse engineering small $4$-manifolds}, Algeb. Geom. Topol. {\bf 7} (2007), 2103--2116. 

\bibitem[5]{KL4M} R. Fintushel and R. Stern, {\em Knots, links, and  $4$-manifolds}, Invent. Math. {\bf 134} (1998), 363--400.

\bibitem[6]{Pin} R. Fintushel \and R. Stern, {\em Pinwheels and nullhomologous surgery on 
$4$-manifolds with $b^+=1$}, Algeb. Geom. Topol. {\bf 11} (2011), 1649--1699. 

\bibitem[7]{LL} T.J. Li  \and A.-K. Liu, {\em Uniqueness of symplectic canonical class, surface cone and symplectic cone of $4$-manifolds with $b^+ = 1$}, Jour. Diff. Geom. {\bf 58} (2001), 331--370.

\bibitem[8]{MMS}  J. Morgan, T. Mrowka, \and Z. Szab\'{o}, {\em Product formulas along $T^3$ for Seiberg-Witten invariants}, Math. Res. Letters {\bf 4} (1997), 915--929.

\bibitem[9]{S} M. Symington, {\em A new symplectic surgery: the 3-fold sum},  Topology Appl. {\bf 88} (1998), 27--53.

\bibitem[10]{T} C. H. Taubes, {\em Seiberg-Witten and Gromov invariants}, Math. Res. Lett. {\bf 2} (1995), 221--238.

\end{thebibliography}
\end{document}